\DeclareSymbolFont{cyrletters}{OT2}{wncyr}{m}{n}
\DeclareMathSymbol{\Sha}{\mathalpha}{cyrletters}{"58}
\newtheorem{theorem}{Theorem}[section]
\newtheorem{lemma}[theorem]{Lemma}
\newtheorem{prop}[theorem]{Proposition}
\newtheorem{cor}[theorem]{Corollary}
\newtheorem{conjecture}[theorem]{Conjecture}
\theoremstyle{definition}
\newtheorem{definition}[theorem]{Definition}
\newtheorem{remark}[theorem]{Remark}
\newcommand{\Z}{\mathbb{Z}}
\newcommand{\Q}{\mathbb{Q}}
\newcommand{\Qb}{\bar{\mathbb{Q}}}
\newcommand{\C}{\mathbb{C}}
\newcommand{\Cx}{\mathbb{C}^\times}
\newcommand{\F}{\mathbb{F}}
\newcommand{\Kx}{K^\times}
\newcommand{\Lx}{L^\times}
\newcommand{\kx}{k^\times}
\newcommand{\kb}{\bar{k}}
\newcommand{\braces}[1]{\left\{#1\right\}}
\newcommand{\Sk}{\mathfrak{S}}
\newcommand{\Ic}{\mathcal{I}}
\newcommand{\Kc}{\mathcal{K}}
\renewcommand{\Mc}{\mathcal{M}}
\newcommand{\Oc}{\mathcal{O}}
\newcommand{\Sc}{\mathcal{S}}
\newcommand{\Wc}{\mathcal{W}}
\newcommand{\chisub}[1]{{\chi_{\lower2.5pt\hbox{$\scriptstyle #1$}}}}
\newcommand{\beq}{\begin{equation}}
\newcommand{\eeq}{\end{equation}}
\newcommand{\beqa}{\begin{eqnarray}}
\newcommand{\eeqa}{\end{eqnarray}}
\newcommand{\beqaN}{\begin{eqnarray*}}
\newcommand{\eeqaN}{\end{eqnarray*}}
\renewcommand{\phi}{\varphi}
\renewcommand{\mod}[1]{\hspace{.025in}\left(\text{mod } #1\right)}
\renewcommand{\dim}[1]{\text{dim}_{#1}}
\renewcommand{\ker}[1]{\text{ker}\left(#1\right)}
\newcommand{\crk}[1]{\text{corank}_{#1}}
\renewcommand{\det}[1]{\text{det}\left(#1\right)}
\newcommand{\tr}[1]{\text{tr}\left(#1\right)}
\newcommand{\into}{\hookrightarrow}
\newcommand{\restrict}[1]{\hskip -.05in \upharpoonright_{#1}}
\newcommand{\cc}[1]{\overline{#1}}
\newcommand{\ord}[2]{\text{ord}_{{#1}}{#2}}
\newcommand{\ip}[2]{\langle #1,#2 \rangle}
\newcommand{\Norm}[2]{N_{{#1}/{#2}}}
\newcommand{\ind}[3]{\text{ind}_{{#1}/{#2}}~{#3}}
\newcommand{\pp}{\mathfrak{p}}
\newcommand{\GL}[2]{\text{GL}_{#1}(#2)}
\newcommand{\Hom}[3]{\text{Hom}_{#1}(#2,#3)}
\newcommand{\End}[2]{\text{End}_{#1}(#2)}
\newcommand{\Gal}[2]{\text{Gal}({#1}/{#2})}
\newcommand{\Weil}[2]{\mathcal{W}({#1}/{#2})}
\newcommand{\Selmer}[3]{\text{Sel}_{{#1}}({#2}/{#3})}
\numberwithin{equation}{section}
\title[Local Constants]{Comparing Local Constants of Elliptic Curves in Dihedral Extensions}
\author{Sunil Chetty}
\thanks{This material is based upon work supported by the National Science Foundation under grant DMS-0457481. The author would like to thank Karl Rubin for his many helpful conversations on this material and careful reading of initial drafts of this paper.}
\begin{document}

\begin{abstract}
	In this paper, we study the theories of analytic and arithmetic local constants of elliptic curves, with the work of Rohrlich, for the former, and the work of Mazur and Rubin, for the latter, as a basis. With the Parity Conjecture as motivation, one expects that the arithmetic local constants should be the (algebraic) additive counterparts to ratios of local analytic root numbers. Here, we calculate the constants on both sides in various cases, establishing this connection for a substantial class of elliptic curves. By calculating the arithmetic constants in some new cases, we also extend the class of elliptic curves for which one can determine lower bounds for the growth of $p$-Selmer rank in dihedral extensions of number fields.
\end{abstract}

\maketitle

\section{Introduction}
Let $k$ be a number field. Consider a complex representation $\tau$ of $\Gal{\kb}{k}$ and denote $F=\kb^{\ker{\tau}}$. The Tate module $T_\ell(E)$ of $E$ has a natural $\Gal{\kb}{k}$-action, and this action can be twisted by $\tau$. In turn, defining Euler factors for this twisted Tate module, one obtains a twisted $L$-function $L(E/k,\tau,s)$ associated to $\tau$. Next, defining the usual completed $L$-function (see \cite[\S C.16]{Silv} or \cite[\S 16.3]{Huse})
	$$\Lambda(E/k,\tau,s):=\Norm{k}{\Q}(E,\tau)d_{k/\Q}^2\Gamma_{k}(s)L(E/k,\tau,s),$$
one has the following standard conjecture.

\begin{conjecture}
\label{twistedfunctionaleq}
 The function $\Lambda(E/k,\tau,s)$ satisfies 
 	$$\Lambda(E/k,\tau,s)=W(E/k,\tau) \Lambda(E/k,\cc{\tau},2-s)$$
 where $W(E/k,\tau)=\prod_{u~\text{of}~k} W(E/k_u,\tau)$ and $|W(E/k_u,\tau)|=1$.
\end{conjecture}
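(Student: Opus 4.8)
We sketch the expected line of attack, stressing at the outset that in the stated generality this remains a conjecture: it is the case, for the motive $h^1(E)\otimes\tau$ over $k$, of the (conjectural) meromorphic continuation and functional equation of motivic $L$-functions, and the only route to it presently available runs through automorphic $L$-functions. \emph{Step 1: reduce $\tau$ to a Hecke character.} By Brauer's induction theorem write $\tau=\sum_i n_i\,\mathrm{Ind}_{G_{k_i}}^{G_k}\chi_i$ with $n_i\in\Z$, each $k_i/k$ finite, and each $\chi_i$ one-dimensional, hence a finite-order Hecke character of $k_i$. The completed $L$-function is inductive, $\Lambda(E/k,\mathrm{Ind}_{G_{k_i}}^{G_k}\chi_i,s)=\Lambda(E/k_i,\chi_i,s)$ (inductivity of $L$-functions and the projection formula for the $L$-factors, the conductor--discriminant formula for the term $\Norm{k}{\Q}(E,\tau)\,d_{k/\Q}^2$, compatibility of $\Gamma$-factors with induction), so it is enough to establish the functional equation of $\Lambda(E/L,\chi,s)$ for a Hecke character $\chi$ of a number field $L$. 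This is precisely what the dihedral setting of this paper produces: there $\tau=\mathrm{Ind}_{\Gal{F}{K}}^{\Gal{F}{k}}\chi$ for the quadratic subfield $K/k$, and $L(E/k,\tau,s)=L(E/K,\chi,s)$.

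\emph{Step 2: automorphy and the $\GL{2}$ functional equation.} Assume $E/L$ is modular, so that $\pi_E$ is a cuspidal automorphic representation of $\GL{2}{\Ad{L}}$ and $L(E/L,\chi,s)$ agrees, up to matching the Euler factors at the finitely many bad places, with $L(\pi_E\otimes\chi,s)$. The desired statement---analytic continuation together with $\Lambda(E/L,\chi,s)=W(E/L,\chi)\,\Lambda(E/L,\cc{\chi},2-s)$---is then the classical functional equation for $\GL{2}$ over a number field (Hecke; Jacquet--Langlands; Godement--Jacquet), $W(E/L,\chi)$ being the global root number of $\pi_E\otimes\chi$. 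Modularity of $E/L$ is a theorem for $L=\Q$ (Wiles, Taylor--Wiles, Breuil--Conrad--Diamond--Taylor) and for many totally real $L$ (e.g.\ real quadratic fields); when it is not available one invokes potential modularity of $E$ over a totally real or CM Galois extension $L'/L$ and descends by a Brauer argument over $\Gal{L'}{L}$---this settles the conjecture in further cases but keeps the hypothesis visibly essential.

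\emph{Step 3: factor the constant and check $|W_u|=1$.} The global root number of an automorphic representation is, by construction, the product over all places $w$ of $L$ of the local factors $\varepsilon(\pi_{E,w}\otimes\chi_w,\psi_w)$; unwinding the Brauer decomposition and using the inductivity of Langlands--Deligne local constants in virtual representations of degree $0$, these regroup into $W(E/k,\tau)=\prod_{u}W(E/k_u,\tau)$ with each factor depending only on the local data $(E/k_u,\,\tau|_{G_{k_u}})$. Finally $|W(E/k_u,\tau)|=1$: as normalized in Deligne's theory, the local root number of a Weil--Deligne representation that is pure of weight one, twisted by the unitary $\tau|_{G_{k_u}}$, lies on the unit circle; globally $|W(E/k,\tau)|=1$ then also follows formally by iterating the functional equation (so $W(E/k,\tau)\,W(E/k,\cc{\tau})=1$) and conjugating ($W(E/k,\cc{\tau})=\cc{W(E/k,\tau)}$). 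The genuine obstacle to an unconditional proof is exactly the modularity input: $E$ is not known to be modular over an arbitrary number field, and for $\tau$ with exceptional (e.g.\ $A_5$) projective image no form of Langlands functoriality currently supplies the automorphic object one would need, so this argument is complete only in the monomial case---which, fortunately, includes the dihedral extensions that are the subject of this paper.
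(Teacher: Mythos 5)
The paper does not prove Conjecture \ref{twistedfunctionaleq}---it is stated as a standard open conjecture (with \cite{Silv} and \cite{Huse} cited only for the definition of the completed $L$-function), and the rest of the paper works \emph{modulo} it. You correctly identify this, and your sketch is an accurate account of the standard conditional route: Brauer induction together with inductivity (in degree zero) of $L$- and $\varepsilon$-factors to reduce to twists of $E$ by Hecke characters, modularity/potential modularity to import the $\GL{2}$ functional equation, and the Langlands--Deligne theory of local constants to produce the factorization $W=\prod_u W_u$ with each $|W_u|=1$. One small point worth flagging: in Rohrlich's normalization the local terms $W(E/k_u,\tau)$ are \emph{defined} as normalized $\varepsilon$-constants, so $|W(E/k_u,\tau)|=1$ holds by construction rather than being a consequence of purity; your ``iterate the functional equation'' argument gives $|W(E/k,\tau)|=1$ globally but is not needed place by place. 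Since no proof exists in the paper, there is nothing further to compare; your summary of which inputs are theorems and which remain open (modularity over general number fields, non-monomial $\tau$) is correct.
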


Analogously, when $\tau$ is irreducible, one wishes to understand the $\tau$-isotypic part $E(F)^{\tau}$ of the module $E(F)\otimes\C$. For example, when $\tau$ is a 1-dimensional representation, i.e. $\tau:\Gal{F}{k}\rightarrow \Cx$, the $\tau$-part $E(F)^{\tau}$ is
$$E(F)^{\tau}:=\braces{P\in E(F)\otimes\C ~:~ P^\sigma = \tau(\sigma)P~~~\forall \sigma\in \Gal{F}{k}}.$$

An extension (related to the Deligne-Gross Conjecture \cite{RohrRS}) of the Birch and Swinnerton-Dyer Conjecture predicts the following generalization of the Parity Conjecture.

\begin{conjecture}
\label{parityconj}
 If $\tau$ is irreducible and $L(E/k,\tau,s)=L(E/k,\cc{\tau},s)$, then for $r(\tau)=\dim{\C}E(F)^\tau/\dim{}(\tau)$,
	$$W(E/k,\tau)=(-1)^{r(\tau)}.$$
\end{conjecture}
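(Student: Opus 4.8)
The plan is to reduce Conjecture~\ref{parityconj}, for the class of elliptic curves treated in this paper, to a local, place-by-place comparison of the two flavors of local constant, and then to invoke the known cases of the $p$-parity conjecture. First I would use the dihedral hypothesis to linearize the problem: when $F/k$ lies in a generalized dihedral extension with quadratic subextension $K/k$, a one-dimensional $\tau$ factors through an abelian quotient of $\Gal{F}{k}$ of order dividing $4$, so that $E(F)^\tau$ is cut out by a quadratic character and $r(\tau)$ is a difference of Mordell--Weil ranks of $E$ and a quadratic twist $E^d$ over $k$; correspondingly $W(E/k,\tau)$ becomes a ratio $W(E^d/k)/W(E/k)$. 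For the two-dimensional irreducible $\tau=\mathrm{Ind}_K^k\chi$ one uses inductivity of root numbers to rewrite $W(E/k,\tau)$ in terms of $W(E/K,\chi)$ and the root number of $\eta_{K/k}$, and likewise identifies $E(F)^\tau$ with a $\chi$-part over $K$. In either case the global sign decomposes as a product $\prod_u$ of purely local terms indexed by the places $u$ of $k$.

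Next, on the analytic side I would compute each $W(E/k_u,\tau)$ following Rohrlich: at places of good reduction outside the ramification of $\tau$ the factor is trivial, at the remaining finite places one expands it via the local root number of $E/k_u$, the conductor exponents, and the Gauss sums attached to $\tau$, and at the archimedean places one uses the standard formula. On the arithmetic side I would compute, for the same finite places, the Mazur--Rubin arithmetic local constant $\delta_u$ governing the jump in twisted $p$-Selmer corank, using their cohomological description in terms of local Tamagawa-type indices and the image of $E(k_u)\otimes\Q_p/\Z_p$ under the twisting class.

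The heart of the argument is the local matching: I would prove, one place at a time, that $W(E/k_u,\tau)=(-1)^{\delta_u}$ for every $u$. Multiplying over all $u$ gives $W(E/k,\tau)=(-1)^{\sum_u\delta_u}$, and the Mazur--Rubin machinery identifies $\sum_u\delta_u$ with the parity of the twisted $p$-Selmer corank; combining this with the known instances of the $p$-parity conjecture (Nekov\'{a}\v{r}, T.~and V.~Dokchitser) and finiteness of the relevant Tate--Shafarevich groups, that corank has the same parity as $r(\tau)=\dim{\C}E(F)^\tau/\dim{}(\tau)$, giving $W(E/k,\tau)=(-1)^{r(\tau)}$ for the curves in question.

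The main obstacle I anticipate is the local comparison at primes of additive, potentially-good, or potentially-multiplicative reduction that also ramify wildly in $F/k$, in particular when $p$ divides a ramification index: there both Rohrlich's root-number formula and the Mazur--Rubin local constant demand delicate computation with the local Galois action and with Tamagawa factors, and it is exactly in closing this last family of local cases that one either completes the comparison or is forced to shrink the set of admissible elliptic curves. A secondary difficulty is the passage from Selmer parity to the Mordell--Weil invariant $r(\tau)$: this requires $\TS{E}{F}$ to be finite, or at least control of its $p$-part, in order to convert the statement about coranks into the asserted statement about $\dim{\C}E(F)^\tau$.
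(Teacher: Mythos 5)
This is a \emph{conjecture}, not a theorem: the paper never purports to prove Conjecture~\ref{parityconj}, and there is no proof in the source to compare your attempt against. The conjecture is stated as motivation, and the paper's actual deliverable is considerably weaker: Theorem~\ref{paritythm} (local matching of $\gamma_u$ and $\delta_v$ under reduction-type hypotheses on $E$) and its Corollary~\ref{relparitycor}, a \emph{relative} version of the $p$-Selmer Parity Conjecture comparing $\tau_\rho$ to $\tau_1$, not the absolute statement $W(E/k,\tau)=(-1)^{r(\tau)}$.

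Beyond that mismatch, your sketch has a substantive conceptual gap about what the Mazur--Rubin constants measure. You write that the local comparison gives $W(E/k,\tau)=(-1)^{\sum_u \delta_u}$ with $\sum\delta_u$ identified with the twisted Selmer corank; but by Theorem~\ref{MR6.4} the sum $\sum_v\delta_v$ is congruent only to the \emph{difference} $r_p^{arith}(E,\tau_\rho)-r_p^{arith}(E,\tau_1)$, and on the analytic side the quantity the paper matches against $\delta_v$ is the ratio $W(E/k_u,\tau_{\rho,u})/W(E/k_u,\tau_{1,u})$, not $W(E/k_u,\tau)$ alone. So the place-by-place matching, even granting every hypothesis in Theorem~\ref{paritythm}, only establishes equality of \emph{differences} of parities. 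To pass to the absolute conjecture you would further need (i) the base case, i.e.\ known $p$-parity for $E/K$ (your appeal to Nekov\'a\v{r} and the Dokchitsers), which carries its own hypotheses not present in the conjecture's statement; (ii) the conjectural functional equation Conjecture~\ref{twistedfunctionaleq} to relate $W(E/k,\tau)$ to $r^{an}$; and (iii) finiteness of $\Sha(E/F)[p^\infty]$ to move from $\Sc_p(E/F)^\tau$ to the Mordell--Weil invariant $r(\tau)$, which you note as a ``secondary difficulty'' but which is a full-strength open conjecture, precisely the Shafarevich--Tate Conjecture the paper invokes when it replaces $E(F)^\tau$ by $\Sc_p(E/F)^\tau$. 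Finally, the conjecture is stated for arbitrary irreducible self-dual $\tau$ over any number field $k$, whereas both your proposal and the paper's machinery restrict from the outset to the dihedral setting $\tau=\ind{K}{k}{\rho}$ with $\rho$ of $p$-power order; the general case is not touched.

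Your overall instinct --- reduce to a local comparison of Rohrlich root numbers and Mazur--Rubin norm indices, then assemble --- is exactly the program of the paper, so the first half of your sketch tracks the source closely. But the conclusion you draw outruns what that program can deliver without stacking several unproved conjectures on top, which is why the paper presents it as a conjecture and proves only the relative Selmer-parity consequence.
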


As in \cite{MR}, rather than working with the modules $E(F)^{\tau}$ discussed above, we consider the corresponding modules $\Sc_p(E/F)^{\tau}$ associated to pro-$p$ Selmer groups $\Sc_p(E/F)$ (see \cite[\S 3]{MR}), replacing each $\C$ in the above considerations with $\bar{\Q}_p$. The Shafarevich-Tate Conjecture asserts that $E(F)^{\tau}$ and $\Sc_p(E/F)^{\tau}$ have the same dimension (over their respective fields). Define constants
	$$\begin{array}{rl}
			r_p^{arith}(E,\tau):= & \dim{\Qb_p}\Sc_p(E/F)^\tau/\dim{}(\tau),\\
			r^{an}(E,\tau):= & \ord{s=1}\Lambda(E/k,\tau,s).\\
		\end{array}$$
Conjecture \ref{twistedfunctionaleq} then implies that $W(E/k,\tau)$ determines $r^{an}(E,\tau)\mod{2}$. Considering two representations $\tau$ and $\tau'$, Conjecture \ref{parityconj} and the Shafarevich-Tate Conjecture imply that 
	\begin{equation}
	\label{introrelparity}
		r^{an}(E,\tau)-r^{an}(E,\tau')\equiv r_p^{arith}(E,\tau)-r_p^{arith}(E,\tau')\mod{2}.
	\end{equation}

\subsection{Main Goals}
\label{maingoals}
We now consider a more specific setting. Fix a rational prime $p>3$ and a quadratic extension $K$ of $k$. Next, fix a character $\rho$ of $\Gal{\kb}{K}$ of order $p^n$ and the induced representations $\tau_{\rho}=\ind{K}{k}{\rho}$ and $\tau_1=\ind{K}{k}{1}$ from $\Gal{\kb}{K}$ to $\Gal{\kb}{k}$. With $\rho$ we have an associated cyclic extension $L/K$ of degree $p^n$, specifically $L=\kb^{\ker{\rho}}$, and we assume $\rho$ is such that $L/k$ is Galois. We assume that the tower $k\subset K\subset L$ is dihedral, i.e. that the non-trivial element $c\in\Gal{K}{k}$ acts on $g\in\Gal{L}{k}$ via conjugation as $cgc=g^{-1}$.\\
\indent In this setting Mazur and Rubin \cite{MR} have developed a theory, independent of any conjectures, which gives a method for desribing the right-hand side of \eqref{introrelparity} in terms of a sum over places $v$ of $K$ of arithmetic (cohomological) local constants $\delta_v$ (see \S\ref{arithmeticlocalconstants}). On the analytic side, the decomposition
	$$W(E/k,\tau)=\prod_u W(E/k_u,\tau)$$
of Conjecture \ref{parityconj}, shows that the parity of the left-hand side of \eqref{introrelparity} is given by a sum over places $u$ of $k$ of local constants $\gamma_u\in\Z/2\Z$ defined by
	$$(-1)^{\gamma_u}:=W(E/k_u,\tau_{\rho})/W(E/k_u,\tau_1).$$
It is a natural question to ask whether or not these local constants match ``prime by prime'', i.e. is $\gamma_u\equiv\sum_{v\mid u} \delta_v\mod{2}$?\\
\indent In the following, we calculate these local constants in various cases, in particular extending the calculations of \cite{MR}. Our main result is Theorem \ref{paritythm}, which proves, under mild assumptions, that the analytic and arithmetic local constants do coincide. As an application of the new calculations of the arithmetic local constants, we are able to relax the hypotheses of Theorem 7.2 of \cite{MR} which provides a lower bound for growth in $p$-Selmer rank. As other application, we obtain a different proof (within the assumptions of Theorem \ref{paritythm}) than that of Dokchitser and Dokchitser \cite{DDReg}, and of De La Rochefoucauld \cite{delaRo}, of a relative version of the $p$-Selmer Parity Conjecture.

\section{Local Constants of Elliptic Curves}
The theory describing the analytic local root numbers associated to an elliptic curve $E$ begins with the work of Deligne \cite{Deligne}, Tate \cite{Tate}, and culminates (for our purposes) with Rohrlich \cite{RohrVar}, \cite{RohrWeil}, \cite{RohrGal}. On the algebraic side, Mazur and Rubin \cite{MR} provide the foundational work by defining an arithmetic local constant in terms of a local norm-index of a set of local points of $E$ (see Definition \ref{arithconstdef}). In this section we recall some of the relevant notation, definitions, and results of \cite{RohrGal} and \cite{MR}.

\subsection{Analytic Local Root Numbers}
\label{analyticrootnumber}
 For this section, let $k$ be a local field of residue characteristic $\ell$. Following Rohrlich \cite{RohrGal}, we begin by defining some of the objects which will appear in the next three theorems. We consider a representation $\tau$ of $\Gal{\kb}{k}$ with real-valued character, and we call $W(E/k,\tau)\in\braces{\pm 1}$ the \emph{local analytic root number} for the pair $(E/k,\tau)$. Theorem \ref{Ro2} below is the main tool, and the first claim therein  is most general formulae for $W(E/k,\tau)$ (previously $W(E/k_u,\tau)$ in Conjecture \ref{twistedfunctionaleq}, where $k_u$ denoted the completion of a global $k$). The remainder of the formulae in Theorem \ref{Ro2} are refinements of the first formula.\\ 
\indent Let $\pi$ be the decomposition map associated to $k$ (see \cite[p.126]{RohrWeil}) and define the Weil group of $k$ by
 	$$\Wc(\kb/k):=\pi^{-1}(\langle\Phi\rangle),$$
where $\Phi$ is the Frobenius element $x\mapsto x^q$ and $q$ is the order of the residue field of $k$. As in \cite{RohrGal}, we identify one-dimensional representations of $\Gal{\kb}{k}$ with those of $\kx$ of finite order, via the reciprocity law homomorphism.\\
\indent Let $H$ be the unramified quadratic extension of $k$ and $\eta$ the unramified quadratic character of $\Gal{\kb}{k}$, i.e. the character of $\Gal{\kb}{k}$ with kernel $\Gal{\kb}{H}$. Let $\phi$ be a tamely ramified character of $H^\times$ such that $\phi\restrict{\kx}=\eta$ and $\phi^2\neq 1_H$. For $\sigma=\ind{H}{k}{\phi}$ and $\theta$ the unramified quadratic character of $H^\times$ set $\hat{\sigma}:=\ind{H}{k}{(\phi\theta)}$, which is a dihedral representation of $\kx$ (see p. 316-318 of \cite{RohrGal}). We note that $\hat{\sigma}$ is independent of which tamely ramified character $\phi$ we chose (see p. 329 of \cite{RohrGal}).\\
\indent When $E/k$ has potentially good reduction, define $M$ to be the minimal extension of $k^{ur}$ such that $E$ acquires good reduction over $M$ \cite[\S 2]{ST}, and let the Weil group of $M/k$ be
	$$\Wc(M/k):=\Wc(\kb/k)/\Gal{\kb}{M}.$$
We denote $c_6$ for the constant appearing in a simplified Weierstrauss model for $E/k$ (see \cite[\S III.1]{Silv}). For $\tau$ a representation of $\Gal{\kb}{k}$ with finite image, set $c(\tau):=\det{\tau}(-1)$. Lastly, for two representations $\tau$ and $\tau'$ of $\Gal{\kb}{k}$ define $\ip{\tau}{\tau'}:=\ip{\tr{\tau}}{\tr{\tau'}}$, with the right-hand side the usual inner product on characters.\\
%
%
\indent There is a canonical representation $\sigma_{E/k}$ of $\Wc(\kb/k)$ associated to $E$ (see \cite[p. 329]{RohrGal}), and for $\sigma=\sigma_{E/k}$ Rohrlich proves the following formulae for $W(E/k,\tau)$.
 
\begin{theorem}[Theorems 1 and 2 of \cite{RohrGal}]
\label{Ro2}
  Let $E$ be defined over $k$, and suppose $\text{dim}~\tau=2$. If $\sigma=\ind{H}{k}{\phi}$ then 
  $$W(E/k,\tau)=c(\tau)\cdot(-1)^{\ip{1}{\tau}+\ip{\eta}{\tau}+\ip{\hat{\sigma}}{\tau}}.$$
Moreover,
  \begin{itemize}
   \item[(i)] If $\ell<\infty$ and $E$ has good reduction over $k$ then $W(E/k,\tau)=c(\tau)$.
   \item[(ii)] If $\ell<\infty$ and $v(j)<0$ then
   $$W(E/k,\tau)=c(\tau)(-1)^{\ip{\chi}{\tau}}$$
   where $\chi$ is the character associated to the extension $k(\sqrt{-c_6})$.
   \item[(iii)] If $5\leq\ell<\infty$ and $v(j)\geq 0$,
   $$W(E/k,\tau)=
    \begin{cases} 
     c(\tau) & \text{if $\Wc(M/k)$ is abelian}\\
     c(\tau)(-1)^{\ip{1}{\tau}+\ip{\eta}{\tau}+\ip{\hat{\sigma}}{\tau}} & \text{otherwise}.
    \end{cases}$$
  \end{itemize}
\end{theorem}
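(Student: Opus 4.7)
The plan is to follow the approach of Rohrlich in \cite{RohrGal}, built on the identification
\[
W(E/k,\tau) = W(\sigma_{E/k}\otimes\tau),
\]
where the right-hand side is the local Deligne-Langlands $\epsilon$-constant of the Weil-Deligne representation attached to $E$ twisted by $\tau$. With this identification in hand, the structural properties of $\epsilon$-factors (inductivity in degree $0$, behavior under multiplication by determinants, and the tame-character formulae) become available, and the task becomes an explicit computation as $\tau$ varies over $2$-dimensional representations with real-valued character.

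First I would establish the general formula in the potentially-good, non-abelian regime, in which $\sigma=\sigma_{E/k}$ is itself of the form $\ind{H}{k}{\phi}$ with $\phi$ tamely ramified and $\phi\restrict{\kx}=\eta$. By inductivity, $W(\sigma\otimes\tau)$ reduces (up to a sign controlled by $\det{\tau}$) to an $\epsilon$-factor of $\phi\cdot(\tau\restrict{H})$ over $H$. Decomposing $\tau\restrict{H}$ into characters and applying the tame-character formula, the contributions collapse into the three inner products $\ip{1}{\tau}$, $\ip{\eta}{\tau}$, and $\ip{\hat{\sigma}}{\tau}$, with $\hat{\sigma}=\ind{H}{k}{(\phi\theta)}$ encoding the twist by the unramified quadratic character $\theta$; the factor $c(\tau)$ absorbs the determinant dependence. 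Independence of $\hat{\sigma}$ from the choice of $\phi$ is because modifying $\phi$ by a character of $H^\times/\kx$ either fixes or swaps the pair $\{\phi,\phi\theta\}$, leaving the induction unchanged.

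Next I would specialize to obtain (i)--(iii). For (i), good reduction makes $\sigma_{E/k}$ unramified of dimension $2$ with trivial monodromy, and the $\epsilon$-factor of an unramified representation immediately reduces to $c(\tau)$. For (ii), the condition $v(j)<0$ gives $\sigma_{E/k}$ of Steinberg type, and the unramified-or-quadratic character distinguishing the twist is precisely the one cut out by $k(\sqrt{-c_6})$ (compare Tate's algorithm \cite{Tate}), producing the extra factor $(-1)^{\ip{\chi}{\tau}}$. For (iii), with $5\leq\ell<\infty$ and $v(j)\geq 0$, the curve has potentially good reduction, and $\Wc(M/k)$ describes the image of inertia; if this group is abelian then $\sigma_{E/k}$ factors through a $1$-dimensional character and the three inner products cancel mod $2$, leaving only $c(\tau)$, while otherwise one is precisely in the dihedral regime of the first part and the general formula applies verbatim.

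The main obstacle is the wild-ramification analysis hidden in the $\epsilon$-factor of $\phi\cdot(\tau\restrict{H})$; the hypothesis $\ell\geq 5$ in (iii) is precisely what ensures $\phi$ may be taken tamely ramified, so that the tame-character formula applies cleanly and the expression collapses to the stated combinatorial identity in inner products of $\tau$ with characters. Handling $\ell=2,3$ would require a substantially different wild local analysis, which the theorem wisely does not claim.
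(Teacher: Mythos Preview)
The paper does not prove this statement at all: Theorem~\ref{Ro2} is quoted verbatim from Rohrlich (Theorems~1 and~2 of \cite{RohrGal}) and is used purely as a black box in the subsequent local computations. There is therefore no proof in the paper to compare your proposal against.

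That said, your sketch is a fair high-level summary of Rohrlich's actual argument in \cite{RohrGal}: the identification $W(E/k,\tau)=W(\sigma_{E/k}\otimes\tau)$, the reduction via inductivity when $\sigma_{E/k}=\ind{H}{k}{\phi}$, and the specializations according to reduction type are all the right structural moves. If your intent was to reproduce Rohrlich's proof, the outline is sound in shape, though the genuinely hard step---the explicit tame $\epsilon$-factor computation that collapses everything to the three inner products $\ip{1}{\tau}$, $\ip{\eta}{\tau}$, $\ip{\hat{\sigma}}{\tau}$---is asserted rather than carried out, and that is where all of Rohrlich's work lies. For the purposes of this paper, however, no proof is expected or needed: a citation suffices.
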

 
\begin{remark}
In case (ii) Rohrlich notes that the character $\chi$ is completely determined by the reduction type of $E$. Specifically, $\chi$ is trivial, unramified, or ramified if $E$ has split multiplicative, nonsplit multiplicative, or additive reduction, respectively. In each case $\chi$ is associated to the extension over which $E$ attains split multiplicative reduction (see \cite[p.148]{RohrWeil}). We will make use of this characterization more than the explicit description of the extension.
\end{remark}
 
\begin{prop}[Proposition 7 of \cite{RohrGal}]
\label{Ro7} 
 If $\sigma_{E/k}=\chi\oplus\chi^{-1}$ for some character $\chi$ of $k^\times$ and $\tau$ is as in Theorem \ref{Ro2}, then $W(E/k,\tau)=c(\tau).$
\end{prop}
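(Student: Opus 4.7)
The plan is to bypass the case analysis of Theorem~\ref{Ro2} and derive the identity $W(E/k,\tau)=c(\tau)$ directly from two standard properties of local $\epsilon$-factors: their multiplicativity on direct sums, and the functional equation relating a representation to its complex conjugate. The hypothesis $\sigma_{E/k}=\chi\oplus\chi^{-1}$ makes the Weil--Deligne monodromy operator attached to $\sigma_{E/k}$ vanish, so $W(E/k,\tau)$ coincides with the normalized root number of the Weil group representation $\sigma_{E/k}\otimes\tau$.

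First I would invoke multiplicativity of $\epsilon$-factors on direct sums to obtain
$$W(E/k,\tau)=W(\chi\otimes\tau)\cdot W(\chi^{-1}\otimes\tau).$$
Since $\tau$ has real-valued character we have $\tau\cong\cc{\tau}$, and consequently $\chi^{-1}\otimes\tau\cong\overline{\chi\otimes\tau}$. I would then apply the standard identity $W(\rho)\cdot W(\cc{\rho})=\det{\rho}(-1)$ with $\rho=\chi\otimes\tau$: because $\tau$ is $2$-dimensional, $\det{\chi\otimes\tau}=\chi^{2}\cdot\det{\tau}$, and so
$$W(E/k,\tau)=\det{\chi\otimes\tau}(-1)=\chi(-1)^{2}\cdot\det{\tau}(-1)=c(\tau),$$
as required.

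The main obstacle I anticipate is not algebraic but sign-bookkeeping: verifying that Rohrlich's normalized root number genuinely satisfies the two properties above in precisely the stated form, with no stray factors of $\chi(-1)$ or $\det{\tau}(-1)$ sneaking in from a nonstandard convention, and checking that the hypothesis really places us in a situation where $\sigma_{E/k}\otimes\tau$ is a plain Weil group representation (i.e.\ the Deligne monodromy operator on $\sigma_{E/k}$ is trivial), so that multiplicativity of $\epsilon$-factors applies without correction. Once those standard $\epsilon$-factor properties are granted, the remainder of the proof is the short formal calculation above, and in particular the collapse $\chi(-1)^{2}=1$ is precisely what makes the answer $c(\tau)$ rather than a twisted version of it.
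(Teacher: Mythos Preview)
The paper does not supply its own proof of this proposition; it is quoted as Proposition~7 of Rohrlich's paper \cite{RohrGal} and used as a black box, so there is no in-text argument to compare against. That said, your argument is correct and is in fact the standard one (and essentially Rohrlich's): once $\sigma_{E/k}$ decomposes as a direct sum of characters the monodromy operator vanishes, so $W(E/k,\tau)$ is the ordinary $\epsilon$-factor of the Weil representation $(\chi\oplus\chi^{-1})\otimes\tau$, and multiplicativity together with the functional equation $W(\rho)W(\cc{\rho})=\det{\rho}(-1)$ collapses the product to $\chi(-1)^{2}\det{\tau}(-1)=c(\tau)$.

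The one step worth an extra sentence of justification is the identification $\chi^{-1}\otimes\tau\cong\overline{\chi\otimes\tau}$: this uses both $\tau\cong\cc{\tau}$ (which follows from the standing hypothesis that $\tau$ has real-valued character) and $\chi^{-1}=\cc{\chi}$. The latter is not a general fact about quasi-characters but holds here because in Rohrlich's normalization $\sigma_{E/k}$ is essentially unitary (the Frobenius eigenvalues lie on the unit circle after the $|\cdot|^{1/2}$-twist), so the characters appearing in an abelian decomposition are unitary. Your ``sign-bookkeeping'' caveat is exactly the right place to record this.
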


\subsection{Arithmetic Local Constants}
\label{arithmeticlocalconstants}
As mentioned in the introduction, we will work with pro-$p$ Selmer groups rather than the Mordell-Weil group $E(K)$ of $K$-rational points on $E$. Here we give a brief summary of the standard exact sequences relating $E(K)$ and Selmer groups (see \cite[\S X.4]{Silv} or \cite[\S 8.3]{Huse} for more detail).\\
\indent Our prime $p$ defines an isogeny $E\to E$ and in turn defines the usual $p$-Selmer group sitting in the exact sequence
	$$0\to E(K)/pE(K)\to \Selmer{p}{E}{K}\to \Sha(E/K)[p]\to 0,$$
where $\Sha(E/K)$ is the Shafarevich-Tate group of $E$ over $K$. We have similar sequences for each power $p^i$ of $p$, and in the limit we obtain the $p^\infty$-Selmer group sitting in the exact sequence
	$$0\to E(K)\otimes\left(\Q_p/\Z_p\right)\to \Selmer{p^\infty}{E}{K}\to \Sha(E/K)[p^\infty]\to 0.$$
The pro-$p$ Selmer group of $E$ over $K$ is then the Pontrjagin dual
	$$\Sc_p(E/K):=\Hom{}{\Selmer{p^\infty}{E}{K}}{\Q_p/\Z_p},$$
and we consider it as a $\bar{\Q}_p$-vector space by tensoring with $\bar{\Q}_p$.\\
\indent For the rest of this section we assume the setting and notation of \S\ref{maingoals}. In addition, $v$ will denote a prime of $K$, $u\subset v$ the prime of $k$ below $v$, $w\supset v$ a prime of $L$ above $v$, and denote $k_u$, $K_v$ and $L_w$ for the local fields associated to $u$, $v$, and $w$. We consider $\Gal{L_w}{k_u}\leq\Gal{L}{k}$, and we set $\tau_{\rho,u}$ (resp. $\tau_{1,u}$) to be $\tau_{\rho}$ (resp. $\tau_{1}$) restricted to $\Gal{L_w}{k_u}$.\\
\indent When $L_w\neq K_v$, let $L'_w$ be the unique subfield of $L_w$ containing $K_v$ with $[L_w:L'_w]=p$, and otherwise let $L'_w:=L_w=K_v$.

\begin{definition}[Corollary 5.3 of \cite{MR}]
\label{arithconstdef}
 For each prime $v$ of $K$, define the \emph{arithmetic local constant} $\delta_v:=\delta(v,E,\rho)\in\Z/2\Z$ to be
 $$\delta_v:=\dim{\F_p}E(K_v)/(E(K_v)\cap\Norm{L_w}{L'_w}E(L_w))\mod{2}.$$
\end{definition}

Recall that we have defined (in \S\ref{maingoals}) the ``arithmetic rank'' $r_p^{arith}$ of $(E,\tau)$ by
	$$r_p^{arith}(E,\tau):= \dim{\Qb_p}\Sc_p(E/L)^\tau/\dim{}(\tau).$$
As mentioned in \S\ref{maingoals}, in \cite{MR} Mazur and Rubin show that (a sum of) the $\delta_v$ relate the arithmetic ranks of $(E,\tau_{\rho})$ and $(E,\tau_{1})$. Specifically, they prove the following.  

\begin{theorem}[Theorem 6.4 of \cite{MR}]
\label{MR6.4}
	If $S$ is a set of primes of $K$ containing all primes above $p$, all primes ramified in $L/K$, and all primes where $E$ has bad reduction, then
	$$r_p^{arith}(E,\tau_{\rho})-r_p^{arith}(E,\tau_{1})\equiv\sum_{v\in S}\delta_v~\mod{2}.$$
\end{theorem}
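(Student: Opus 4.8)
The plan is to compute both "arithmetic ranks" $r_p^{arith}(E,\tau_\rho)$ and $r_p^{arith}(E,\tau_1)$ by relating the $\tau$-isotypic components of the pro-$p$ Selmer group $\Sc_p(E/L)$ to Selmer groups attached to $E$ and its $\rho$-twist over $K$, and then to track the difference of the two via a local-to-global computation whose local pieces are exactly the norm-indices defining $\delta_v$. Concretely, since $\tau_\rho = \ind{K}{k}{\rho}$ and $\tau_1 = \ind{K}{k}{1}$, Shapiro's lemma (Frobenius reciprocity for induced representations) identifies $\Sc_p(E/L)^{\tau_\rho}$ with the $\rho$-component $\Sc_p(E/L)^\rho$ of the pro-$p$ Selmer group viewed over $K$, and $\Sc_p(E/L)^{\tau_1}$ with the ordinary pro-$p$ Selmer group $\Sc_p(E/K)$ (tensored up to $\Qb_p$). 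Since $\dim(\tau_\rho) = \dim(\tau_1) = 2$ while the induced objects live over $K$, the normalization by $\dim(\tau)$ is accounted for, and the desired congruence becomes
\[
\dim_{\Qb_p}\Sc_p(E/L)^\rho - \dim_{\Qb_p}\Sc_p(E/K) \equiv \sum_{v\in S}\delta_v \pmod 2.
\]

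The heart of the argument is to realize both Selmer groups as Selmer groups in the sense of Bloch--Kato/Greenberg attached to the single $\Gal{\kb}{K}$-module $T_p(E)$ but with different \emph{local conditions} at the places dividing $p$ and at the ramified/bad places, and to apply a global Euler characteristic (Greenberg--Wiles) comparison. The two families of local conditions agree at all primes outside $S$ — this is why $S$ is chosen to contain the primes above $p$, the primes ramified in $L/K$, and the primes of bad reduction — so the difference in global dimensions is, up to the contribution from the dual Selmer groups, a sum of purely local terms
\[
\sum_{v\in S}\dim_{\F_p}\frac{H^1(K_v, -)}{\text{(first condition)}} - \dim_{\F_p}\frac{H^1(K_v, -)}{\text{(second condition)}}.
\]
One then identifies each such local discrepancy, modulo $2$, with $\dim_{\F_p}E(K_v)/(E(K_v)\cap \Norm{L_w}{L'_w}E(L_w))$, i.e. with $\delta_v$. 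This uses the fact that over $K_v$ the $\rho$-twisted local condition is, by construction of $\rho$ and the cyclic extension $L/K$, the image of the norm map from level $L'_w$ inside $E(K_v)$, together with the standard local Tate duality and local Euler characteristic formula (which forces the self-dual module $T_p(E)$ to have balanced local conditions, so that the contributions of a Selmer condition and its orthogonal complement match up and the "defect" is symmetric). The self-duality of $E$ and the fact that $p>3$ (so $E[p]$ has no exceptional behavior and $H^0, H^2$ contributions vanish at the relevant places) make the global dual Selmer terms cancel modulo $2$ in the difference.

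The main obstacle I expect is the careful bookkeeping at the places $v \mid p$ and the ramified places: one must verify that the "$\rho$-twisted" local conditions really do coincide with the naive norm-image conditions appearing in Definition \ref{arithconstdef}, and that no parity-violating error term is introduced by the finite pieces ($H^0$ terms, the torsion in $\Sha$, the difference between the $p$-Selmer and $p^\infty$-Selmer formulations). A clean way to sidestep some of this is to cite the relevant structural results of \cite{MR} directly — in particular their analysis of $\Sc_p(E/L)^\chi$ for characters $\chi$ of $\Gal{L}{K}$ — and reduce to checking the local index identity one place at a time, which is the content one genuinely needs and which reduces, via the snake lemma applied to $E \xrightarrow{p} E$ over the tower $K_v \subset L'_w \subset L_w$, to the definition of $\delta_v$.
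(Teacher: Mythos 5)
The paper's own proof of Theorem~\ref{MR6.4} is not a reproof of the Mazur--Rubin result at all: it is a short translation lemma. Following \cite[\S 3]{MR}, it sets $R$ to be the maximal order in the cyclotomic field of $p^n$-th roots of unity with unique prime $\pp$ above $p$, takes the ideal $\Ic=\pp^{p^{n-1}}$, and forms the $\Ic$-twist $A=\Ic\otimes E$, an abelian variety with $R\subset\End{K}{A}$. It then records the identities
\[
r_p^{arith}(E,\tau_{\rho})=\crk{R\otimes\Z_p}\Selmer{p^\infty}{A}{K},\qquad
r_p^{arith}(E,\tau_{1})=\crk{\Z_p}\Selmer{p^\infty}{E}{K},
\]
whereupon the asserted congruence is verbatim the statement of \cite[Thm.~6.4]{MR}, and the theorem follows by citation. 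So the paper's only obligation is to establish the dictionary between its notation (the $\tau$-isotypic arithmetic ranks) and Mazur--Rubin's (coranks of Selmer groups of $\Ic$-twists), which is exactly what it does.

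Your proposal goes a genuinely different route: rather than translate and cite, you sketch a direct argument of Greenberg--Wiles type, using Shapiro's lemma to pass from the $\tau$-isotypic components over $k$ to $\rho$-isotypic components over $K$, viewing both sides as Selmer groups for $T_p(E)$ with two families of local conditions that coincide outside $S$, and identifying each local discrepancy (modulo $2$) with the norm-index in Definition~\ref{arithconstdef}. That is in the right spirit --- it is essentially an outline of what Mazur and Rubin actually do in \cite[\S\S 5--6]{MR} --- and the Shapiro reduction with the $\dim(\tau)$-normalization is correct. But as written it is a proof sketch with the hard steps deferred: the precise verification that the $\rho$-twisted local condition at each $v\in S$ really equals the image of $\Norm{L_w}{L'_w}$ inside $E(K_v)$, and the control of the parity contributions from $H^0$, torsion, and $\Sha$, are precisely the content of \cite{MR} that would need to be reproduced. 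You acknowledge this yourself, and the honest resolution you suggest --- cite the structural results of \cite{MR} and reduce to the local index identity --- is effectively what the paper does, only the paper reduces all the way to the cited theorem rather than to its intermediate lemmas. In short: your approach is sound in outline but amounts to a longer path to the same citation, whereas the paper's proof is just the notational bridge.
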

\begin{proof}
	Here we show that the above statement is in fact equivalent to that of Theorem 6.4 of \cite{MR}. The notation follows \cite[\S 3]{MR}. Let $R$ be the maximal order in in the cyclotomic field of $p^n$-roots of unity, so $R$ has a unique prime $\pp$ above $p$. Define $\Ic:=\pp^{p^{n-1}}$ and define the $\Ic$-twist of $E$ by $A:=\Ic\otimes E$ (in the sense of \cite{MRS} and \cite{MR}), an abelian variety with $R\subset\End{K}{A}$. We then have
	$$\begin{array}{l}
			r_p^{arith}(E,\tau_{\rho})=\crk{R\otimes\Z_p}\Selmer{p^\infty}{A}{K},\\
		  r_p^{arith}(E,\tau_{1})=\crk{\Z_p}\Selmer{p^\infty}{E}{K}.
		 \end{array}$$
Thus the conclusion in the theorem is equivalent to
	$$\crk{R\otimes\Z_p}\Selmer{p^\infty}{A}{K}-\crk{Z_p}\Selmer{p^\infty}{E}{K}
				\equiv\sum_{v\in S}\delta_v\mod{2},$$
which is the statement of Theorem 6.4 of \cite{MR}.
\end{proof}

In the next section we will calculate the norm-index defining $\delta_v$ in some cases that were not covered in \cite{MR}. Comparing these calculations in a case by case manner with the analytic root numbers of \S \ref{analyticrootnumber} yields Theorem \ref{paritythm}.\\
\indent Also, in \S\ref{selrankapplications}, we will apply the calculations of the next section in a manner analogous to \S 7 of \cite{MR}. Specifically, we aim to establish a lower bound for the growth of $p$-Selmer rank in a dihedral extension $F/K$ of number fields (see \S\ref{selrankapplications}) with relaxed hypotheses compared to those in the theorems of \S 7 of \cite{MR}.

\section{Local Computations}
\label{localcomputations}
We keep the setting and notation of section \S\ref{arithmeticlocalconstants}. In this section our aim is to calculate the analytic local root numbers and arithmetic local constants in a variety of cases, particularly some not covered in \cite{MR} for the latter. Recall that $c$ is the non-trivial element of $\Gal{K}{k}$.

\subsection{Preliminary Calculations}
\label{splitprimes}
The following two propositions give results for the local analytic root numbers which correspond to Lemma 5.1 and Corollary 5.3 of \cite{MR}. We note that Proposition \ref{analyticMR5.3} allows us to compute $\gamma_u$ independent of the reduction type of $E$ at $v$.

\begin{prop}
\label{analyticMR5.1}
	If $v^c\neq v$, then $W(E/k_u,\tau_{\rho,u})=W(E/k_u,\tau_{1,u})=1.$
\end{prop}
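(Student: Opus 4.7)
The plan is to exploit the fact that $v^c \neq v$ forces $u$ to split in $K/k$, so that locally the induced representations become sums of characters on the local Galois group, and then invoke standard multiplicativity and conjugation properties of root numbers.

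First, I would note that $v^c \neq v$ means the prime $u$ of $k$ splits in $K$, so there are two primes $v$ and $v^c$ of $K$ above $u$, with completions $K_v \cong K_{v^c} \cong k_u$. Choose a place $w$ of $L$ above $v$ (and a corresponding place $w^c$ above $v^c$). Applying Mackey's decomposition formula to $\tau_\rho = \ind{K}{k}{\rho}$ restricted to $\Gal{\kb}{k_u}$ yields
$$\tau_{\rho,u} = \ind{K_v}{k_u}{(\rho|_{G_{K_v}})} \oplus \ind{K_{v^c}}{k_u}{(\rho|_{G_{K_{v^c}}})}.$$
Since each $K_\bullet = k_u$, both inductions are trivial and we obtain a sum of two characters
$$\tau_{\rho,u} = \rho_v \oplus \rho_{v^c},$$
where $\rho_v$ and $\rho_{v^c}$ are the characters of $G_{k_u}$ obtained by restricting $\rho$ at $v$ and $v^c$ respectively. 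The dihedral hypothesis $c g c = g^{-1}$ on $\Gal{L}{k}$ then identifies $\rho_{v^c}$ with $\rho_v^{-1}$, so that $\tau_{\rho,u} \cong \rho_v \oplus \rho_v^{-1}$; in particular its character is real-valued. An identical Mackey argument gives $\tau_{1,u} = 1 \oplus 1$.

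Next, I would use the additivity of root numbers under direct sums, together with the conjugation identity $W(E/k_u, \bar\tau) = \overline{W(E/k_u,\tau)}$. For $\tau_{1,u}$ we immediately get
$$W(E/k_u,\tau_{1,u}) = W(E/k_u,1)^2 = 1,$$
since $W(E/k_u,1) = \pm 1$. For $\tau_{\rho,u}$, writing $\bar{\rho_v} = \rho_v^{-1}$,
$$W(E/k_u,\tau_{\rho,u}) = W(E/k_u,\rho_v)\, W(E/k_u,\rho_v^{-1}) = W(E/k_u,\rho_v)\, \overline{W(E/k_u,\rho_v)} = |W(E/k_u,\rho_v)|^2 = 1.$$

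The computation is essentially bookkeeping once the Mackey decomposition is in hand; the only point that requires care is the identification $\rho_{v^c} = \rho_v^{-1}$ via the dihedral relation, which I would make explicit by choosing compatible embeddings of $L$ into $\kb_u$ at $w$ and $w^c$ and tracking the action of $c$. No reduction-type information about $E$ at $v$ enters, which matches the remark preceding the proposition that $\gamma_u$ is determined independently of the reduction type in this case.
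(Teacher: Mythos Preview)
Your argument is correct, and the first half---obtaining $\tau_{\rho,u}\cong\rho_v\oplus\rho_v^{-1}$ and $\tau_{1,u}\cong 1\oplus 1$ via Mackey and the dihedral relation---is exactly what the paper does (stated more tersely there as ``$K_v=k_u$ and hence $\Gal{L_w}{k_u}=\Gal{L_w}{K_v}$, so $\tau_{\rho,u}=\rho\oplus\rho^{-1}$'').

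The difference is in the final step. The paper feeds this decomposition into Rohrlich's explicit formula (the first claim of Theorem~\ref{Ro2}), checking that $c(\tau)=\det\tau(-1)=1$ and that $\ip{1}{\tau}+\ip{\eta}{\tau}+\ip{\hat\sigma}{\tau}$ is even. You instead use only the general functorial properties of local root numbers---multiplicativity in $\tau$, the identity $W(E/k_u,\bar\tau)=\overline{W(E/k_u,\tau)}$, and $|W|=1$---to get $W(E/k_u,\rho_v)\,W(E/k_u,\rho_v^{-1})=|W(E/k_u,\rho_v)|^2=1$. Your route is more elementary and, notably, does not require $\sigma_{E/k_u}$ to have the special shape $\ind{H}{k_u}{\phi}$ that Theorem~\ref{Ro2} presupposes; it therefore works uniformly across all residue characteristics and reduction types without any side appeal to Proposition~\ref{Ro7}. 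The paper's route, on the other hand, keeps everything inside Rohrlich's framework, which is the language used throughout the rest of \S\ref{localcomputations}.
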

\begin{proof}
	When $v\neq v^c$, we have $K_v=k_u$ and hence $\Gal{L_w}{k_u}=\Gal{L_w}{K_v}$, where $w$ is the prime of $L$ above $v$. It follows that $\tau_{\rho,u}=\rho\oplus\rho^{-1}$ and $\tau_{1,u}=1\oplus 1$. Now, let $\tau=\tau_{\rho,u}$ or $\tau=\tau_{1,u}$. We see $\det{\tau}(-1)=1$ by the decomposition (in either case) of $\tau$. Also
	$$\ip{1}{\tau}+\ip{\eta}{\tau}+\ip{\hat{\sigma}_{E/k}}{\tau}\equiv 0\mod{2},$$
and by the first claim of Theorem \ref{Ro2}, we have $W(E/k_u,\tau)=1$.
\end{proof}

\begin{prop}
\label{analyticMR5.3}
	If $v^c=v$ and $v$ is unramified in $L/K$ then 
	$$W(E/k,\tau_{\rho,u})/W(E/k,\tau_{1,u})=1.$$
\end{prop}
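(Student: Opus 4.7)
The plan is to reduce to the case where $\rho_v := \rho|_{\Gal{\bar{k}}{K_v}}$ is nontrivial, verify that the central signs $c(\tau_{\rho,u})$ and $c(\tau_{1,u})$ agree, and then check case by case using Theorem \ref{Ro2} that the remaining $(-1)$-factor ratio is $+1$.

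If $\rho_v = 1$ (equivalently, $v$ splits completely in $L/K$), then $\tau_{\rho,u} = \tau_{1,u}$ and there is nothing to prove. Assume henceforth $\rho_v \ne 1$. Since $\rho^c = \rho^{-1}$ in the dihedral setting and $\rho_v$ has order a power of $p > 2$, we have $\rho_v \ne \rho_v^c$, so $\tau_{\rho,u} = \ind{K_v}{k_u}{\rho_v}$ is an irreducible two-dimensional representation.

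For the determinant step, I use the formula $\det(\ind{K_v}{k_u}{\chi})|_{k_u^\times} = \chi \cdot \omega_{K_v/k_u}$, where $\omega_{K_v/k_u}$ is the quadratic character of the extension $K_v/k_u$ and $\chi$ is regarded on $k_u^\times$ via the inclusion $k_u^\times \hookrightarrow K_v^\times$ (equivalently, via the transfer map under reciprocity). Since $\rho_v$ is unramified and $-1 \in \mathcal{O}_{K_v}^\times$, we have $\rho_v(-1) = 1$, and hence
$$c(\tau_{\rho,u}) = \rho_v(-1)\,\omega_{K_v/k_u}(-1) = \omega_{K_v/k_u}(-1) = c(\tau_{1,u}),$$
so the central-sign factors cancel in the ratio.

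It then suffices to show that the remaining $(-1)$-exponent in each case of Theorem \ref{Ro2} has the same parity for $\tau_{\rho,u}$ and $\tau_{1,u}$. In case (i) (good reduction) and in case (iii) with $\Wc(M/k_u)$ abelian, the formula gives $W(E/k_u,\tau) = c(\tau)$ and we are done. In cases (ii) and (iii-nonabelian) the extra factor is $(-1)^{\ip{\psi}{\tau}}$ for $\psi = \chi_{k_u(\sqrt{-c_6})}$ or $\psi = 1 + \eta + \hat{\sigma}$ (as a virtual character). By Frobenius reciprocity,
$$\ip{\psi}{\tau_{\rho,u}} - \ip{\psi}{\tau_{1,u}} = \ip{\psi|_{K_v}}{\rho_v - 1_{K_v}}_{K_v},$$
and since $\rho_v$ is unramified of $p$-power order (coprime to $2$), orthogonality of characters of coprime order makes the ramified or quadratic components of $\psi|_{K_v}$ orthogonal to both $\rho_v$ and $1_{K_v}$.

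The main obstacle is handling the components of $\psi|_{K_v}$ that could be trivial, since these would contribute $\ip{1_{K_v}}{1_{K_v}} = 1$ to the difference. Computing $\psi|_{K_v}$ by Mackey requires splitting on whether $v$ is inert or ramified in $K/k$ (equivalently, whether $K_v = H$ or $K_v$ is the ramified quadratic extension), and for $\hat{\sigma} = \ind{H}{k_u}{\phi\theta}$ one must analyze separately whether $K_v = H$ (two double cosets, decomposition into characters) or $K_v \ne H$ (single double coset, induction from $HK_v$). In each subcase the apparent odd contributions must cancel in pairs among the terms of $\psi$, using the structural constraint that $\phi\theta$ is tamely ramified while $\rho_v$ is unramified.
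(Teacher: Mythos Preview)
You have missed the one structural fact that makes this proposition immediate: under the standing dihedral hypothesis, the conditions $v^c=v$ and ``$v$ unramified in $L/K$'' already force $v$ to \emph{split completely} in $L/K$. Indeed, since $L/K$ is abelian, the Frobenius $\Phi_v\in\Gal{L}{K}$ is well defined, and $v^c=v$ gives $c\,\Phi_v\,c^{-1}=\Phi_{v^c}=\Phi_v$, while the dihedral relation gives $c\,\Phi_v\,c^{-1}=\Phi_v^{-1}$; hence $\Phi_v^2=1$ in a group of odd $p$-power order, so $\Phi_v=1$. This is exactly \cite[Lemma~6.5(i)]{MR}, which the paper invokes. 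Consequently $L_w=K_v$, and $\tau_{\rho,u}$ and $\tau_{1,u}$ are both restrictions to the order-$2$ decomposition group $\Gal{K_v}{k_u}$; a one-line Mackey computation (or the reference to \cite[\S5.3]{SerreRep}) shows $\tau_{\rho,u}\cong\tau_{1,u}$, and the ratio of root numbers is $1$.

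Your first sentence already disposes of this case, so in fact your proof is complete at that point --- but you do not notice it, and you then launch into an analysis of the nonexistent case $\rho_v\neq 1$. That analysis is moreover left unfinished: your final paragraph names an ``obstacle'' (trivial constituents of $\psi|_{K_v}$ contributing an odd amount) and asserts that ``the apparent odd contributions must cancel in pairs'' without carrying out the Mackey decomposition or verifying the cancellation. As written this would not be an acceptable proof even if the case arose. The fix is simply to insert the $\Phi_v=\Phi_v^{-1}$ argument (or cite \cite[6.5(i)]{MR}) right after your first sentence and delete everything that follows.
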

\begin{proof}
	In this case, $v$ splits completely in $L/K$ by \cite[6.5(i)]{MR}, i.e. for every prime $w$ of $L$ lying above $v$,
$L_w=K_v$. Now, we have
	$$\begin{array}{rl}
	  	\tau_{\rho,u}: & \Gal{L_w}{k_u}=\Gal{K_v}{k_u}\to\GL{2}{\C}\\ 
		\tau_{1,u}: & \Gal{L_w}{k_u}=\Gal{K_v}{k_u}\to\GL{2}{\C}
	  \end{array}$$
viewing $\Gal{K_v}{k_u}$ as the $v$-decomposition subgroup of $\Gal{L}{k}$. One sees by direct calculation (see for example \cite[\S5.3]{SerreRep}) that $\tau_{\rho,u}\cong\tau_{1,u}$, and the claim follows.
\end{proof}

\subsection{Good Reduction}
\label{comparinggoodred}
In the case of good reduction, the arithmetic local constant has been determined by Mazur and Rubin in \cite{MR}. We recall those results here.

\begin{theorem}[Theorem 5.6 and 6.6 of \cite{MR}]
\label{MR5.6-6.6}
 If $v$ is a prime of $K$ with $v\nmid p$, $v=v^c$, $v$ is ramified in $L/K$, and $E$ has good reduction at $v$, then
 $\delta_v\equiv 0.$
\end{theorem}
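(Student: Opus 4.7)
The plan is to reduce the computation of $\delta_v$ to a statement about the reduction $\tilde E$ over finite fields, where the good-reduction and $v\nmid p$ hypotheses render the formal group invisible modulo $p$, and then to invoke Lang's theorem.

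Since $E$ has good reduction at $v$ and the residue characteristic $\ell$ of $K_v$ is different from $p$, the formal group $\hat E(\mathfrak{m}_M)$ over any finite extension $M/K_v$ is a pro-$\ell$ group, hence uniquely $p$-divisible.  Reduction modulo $v$ therefore induces isomorphisms $E(M)/pE(M)\isomto\tilde E(m)/p\tilde E(m)$ (where $m$ is the residue field of $M$), and these are compatible with norm maps.  This rewrites $\delta_v$ as the mod-$2$ $\F_p$-dimension of
$$\tilde E(k_v)\big/\bigl(\tilde E(k_v)\cap N_{l_w/l'_w}\tilde E(l_w)\bigr),$$
where $l_w$ and $l'_w$ denote the residue fields of $L_w$ and $L'_w$.

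Because $\Gal{L_w}{K_v}$ is a $p$-group acting on a local field of residue characteristic $\ell\neq p$, the extension $L_w/K_v$ is tame: its inertia subgroup is cyclic of $p$-power order and the residue-field extension $l_w/k_v$ is unramified.  For the sub-case in which $L_w/L'_w$ is itself unramified (so $l_w\supsetneq l'_w$), Lang's theorem applied to the abelian variety $\tilde E$ over $k_v$ gives that $N_{l_w/l'_w}\colon\tilde E(l_w)\to\tilde E(l'_w)$ is surjective, so the intersection equals all of $\tilde E(k_v)$ and the contribution to $\delta_v$ vanishes.

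The main obstacle is the totally (tamely) ramified sub-case, where $l_w=l'_w$ and the norm on residues degenerates to multiplication by $p$ on $\tilde E(l'_w)$.  In this scenario the putative contribution to $\delta_v$ is $\dim_{\F_p}\tilde E(k_v)[p]\pmod 2$, so one must rule out $p$-rank exactly $1$.  Here I would combine the standing hypotheses $v=v^c$ and ``$v$ ramified in $L/K$'' with the dihedral structure of $\Gal{L}{k}$: the involution $c$ acts on $\Gal{L_w}{K_v}$ by inversion and on the finite module $\tilde E(k_v)[p]$, and the compatibility between $\rho$ being non-trivial on the local inertia and $c$-inversion forces the $\langle c\rangle$-module $\tilde E(k_v)[p]$ to be even-dimensional over $\F_p$.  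This parity-via-$c$-symmetry is the crux of the argument, and once it is established the equality $\delta_v\equiv 0\pmod 2$ follows.
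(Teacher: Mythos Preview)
Your reduction to the residue field is correct and matches Mazur--Rubin's Theorem~5.6: since the formal group is pro-$\ell$ with $\ell\neq p$, one does obtain $\delta_v\equiv\dim_{\F_p}\tilde E(\kappa_v)[p]\pmod 2$, where $\kappa_v$ is the residue field of $K_v$. Note, however, that your unramified sub-case is vacuous: $\Gal{L_w}{K_v}$ is cyclic of $p$-power order, and since $v$ is ramified the inertia subgroup is non-trivial, hence contains the unique order-$p$ subgroup $\Gal{L_w}{L'_w}$. So $L_w/L'_w$ is \emph{always} totally ramified here, and only your second sub-case is relevant.

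The genuine gap is in your final step. You assert that ``the $\langle c\rangle$-module $\tilde E(\kappa_v)[p]$ is forced to be even-dimensional'', but an involution acting on an $\F_p$-space does not by itself force even dimension; one needs more. The missing ingredient, which is the content of Mazur--Rubin's Theorem~6.6, is the Weil pairing. From the dihedral hypothesis and local class field theory, $c$ acts on $\Gal{L_w}{K_v}$ by inversion, and since $v\nmid p$ is ramified in $L/K$ the image of $\mu_p\subset K_v^\times$ under the reciprocity map is non-trivial; hence $c$ acts on $\mu_p$ by inversion. Because $K_v/k_u$ is unramified (Lemma~6.5 of \cite{MR}), $c$ is the Frobenius of $\kappa_v/\kappa_u$, so this says $q_u\equiv -1\pmod p$, i.e.\ $\det(\Frob{u}\mid E[p])\equiv -1$. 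Thus the eigenvalues of $\Frob{u}$ on $E[p]$ are $\{\alpha,-\alpha^{-1}\}$, and if one of them equals $\pm 1$ then so does the other; consequently $\dim_{\F_p}\ker(\Frob{u}^2-1)=\dim_{\F_p}\tilde E(\kappa_v)[p]$ is $0$ or $2$. Without this Weil-pairing/determinant step your argument is incomplete.
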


\begin{theorem}[Theorem 5.7 of \cite{MR}]
\label{MR5.7}
 Suppose that $E$ is defined over $\Q_p\subset K_v$ with good supersingular reduction at $p$. If $K_v$ contains the unramified quadratic extension of $\Q_p$, then $\delta_v\equiv 0$.
\end{theorem}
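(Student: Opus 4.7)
The plan is to reduce the calculation to the formal group of $E$, then exploit the structure of supersingular reduction together with the assumption that the residue field of $v$ contains $\F_{p^2}$. First, the case $L_w = K_v$ gives $\delta_v = 0$ immediately, so assume $L_w/L'_w$ is cyclic of degree $p$; the residue field of $L'_w$ then still contains $\F_{p^2}$.

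Next, I would apply the reduction exact sequences $0 \to \hat{E}(\mk_F) \to E(F) \to \tilde{E}(k_F) \to 0$ for $F \in \braces{K_v, L'_w, L_w}$, functorially with respect to $N = \Norm{L_w}{L'_w}$. For good supersingular reduction at $p$ we have $a_p = 0$, so $|\tilde{E}(\F_{p^{2m}})| = (p^m \pm 1)^2$ is coprime to $p$ for every $m \geq 1$. Hence $\tilde{E}(k_F)$ has order prime to $p$ whenever $\F_{p^2} \subseteq k_F$, the norm map on the reduction factor is bijective (in both the ramified and unramified sub-cases of $L_w/L'_w$), and a snake-lemma chase yields
\[ \delta_v \equiv \dim{\F_p}\frac{\hat{E}(\mk_{K_v})}{\hat{E}(\mk_{K_v}) \cap N\hat{E}(\mk_{L_w})} \pmod{2}. \]

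The main step is the formal-group computation. I would transport the calculation across the formal-group logarithm, which gives a $G_F$-equivariant isomorphism $\hat{E}(\mk_F) \otimes \Q_p \cong F$ for each $F$. Up to a controlled $p$-power error coming from the logarithm's failure of integrality, the norm cokernel on $\hat{E}$ is matched with the cokernel of $\Trace{L_w}{L'_w}$ restricted to the appropriate lattice in $L_w$. For a $\Z/p$-extension of local fields of residue characteristic $p$, this trace cokernel has $\F_p$-dimension equal to a non-negative integer multiple of the residue degree of $L'_w$ over $\F_p$ (via the standard description of $\text{Tr}(\mk_{L_w})$ as a power of $\mk_{L'_w}$ determined by the different). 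Since $\F_{p^2} \subseteq k_{L'_w}$, that residue degree is even, so the cokernel dimension is even, and $\delta_v \equiv 0$.

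The main obstacle is the integrality accounting: the formal-group logarithm is an isomorphism only after inverting $p$, and one must show that the discrepancy between the integral norm cokernel on $\hat{E}$ and the additive trace cokernel has $\F_p$-dimension of the same parity. In the supersingular height-$2$ case this discrepancy is supported on $\hat{E}[p^N]$ for a uniformly bounded $N$, and since $\hat{E}[p]$ becomes a well-behaved $G_{K_v}$-module over $K_v \supseteq H$ (the unramified quadratic extension of $\Q_p$), both components of the error contribute even $\F_p$-dimensions, preserving the parity.
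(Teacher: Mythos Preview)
The paper does not give a proof of this statement; it simply records it as Theorem 5.7 of \cite{MR} and uses it as input. So there is no ``paper's own proof'' to compare against, only the original argument of Mazur and Rubin.

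Your reduction to the formal group is correct and is also the first step in \cite{MR}: for $p>3$ supersingular one has $a_p=0$, so $|\tilde{E}(\F_{p^{2m}})|=(1-(-p)^m)^2$ is prime to $p$, and a snake-lemma chase moves the question to $\hat{E}$.

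The gap is in your ``main step''. Transporting through the formal logarithm is the wrong tool here: for a height-$2$ formal group the logarithm is neither injective (the kernel is the $p$-power torsion, which is nontrivial precisely in the supersingular case) nor does it have image equal to any obvious $\Oc$-lattice, so the identification of the norm cokernel on $\hat{E}$ with an additive trace cokernel ``up to controlled error'' is not justified. Your final paragraph acknowledges this but does not resolve it: the assertion that the torsion contributions have even $\F_p$-dimension is exactly what needs proof, and it does not follow from $\hat{E}[p]$ being ``well-behaved'' over $K_v$.

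What Mazur and Rubin actually do bypasses the logarithm entirely. By Honda's theory, since $E$ is defined over $\Q_p$ with $a_p=0$, the formal group $\hat{E}$ over $\Oc_H$ (with $H$ the unramified quadratic extension of $\Q_p$) is a Lubin--Tate formal $\Oc_H$-module. Hence for every finite $F\supseteq H$ the group $\hat{E}(\mk_F)$ carries a natural $\Oc_H$-module structure, the norm maps are $\Oc_H$-linear, and the quotient
\[
\hat{E}(\mk_{K_v})\big/\bigl(\hat{E}(\mk_{K_v})\cap N_{L_w/L'_w}\hat{E}(\mk_{L_w})\bigr)
\]
is an $\Oc_H$-module killed by $p$ (since $[p]=N\circ\iota$), i.e.\ an $\F_{p^2}$-vector space. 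Its $\F_p$-dimension is therefore even, giving $\delta_v\equiv 0$. This is the ``even because the residue field contains $\F_{p^2}$'' principle you were aiming for, but obtained structurally from the $\Oc_H$-action rather than through an unjustified comparison with the additive group.
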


\begin{theorem}[Theorem 6.7 of \cite{MR}]
\label{MR6.7} 
 If $v\mid p$ and $E$ has good ordinary reduction at $v$, then $\delta_v\equiv 0$.
\end{theorem}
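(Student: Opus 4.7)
The plan is to compute the parity of the $\F_p$-dimension of $E(K_v)/(E(K_v)\cap \Norm{L_w}{L'_w}E(L_w))$ by filtering through the formal group of $E$ at $v$ and showing the total dimension is even. First, exploiting good reduction, I would use the exact sequence
\begin{equation*}
0 \to \hat{E}(\mk_{K_v}) \to E(K_v) \to \tilde{E}(\kappa_v) \to 0
\end{equation*}
together with its analogues over $L'_w$ and $L_w$, where $\mk_{K_v}$ denotes the maximal ideal of $\Oc_{K_v}$ and $\kappa_v$ the residue field. Since $E$ has good reduction at every place above $v$, these sequences are norm-compatible, and the snake lemma splits the computation of $\delta_v$ into a contribution from the formal group and a contribution from the reduction, whose $\F_p$-dimensions sum to $\delta_v$ modulo~$2$.

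For the reduction piece, $\tilde{E}$ is ordinary over $\kappa_v$, and the residue extension $\kappa_w/\kappa_{L'_w}$ is either trivial (when $L_w/L'_w$ is totally ramified) or cyclic of degree $p$ (when $L_w/L'_w$ is unramified). In the unramified case a Frobenius computation on $\tilde{E}(\kappa_w)\to \tilde{E}(\kappa_{L'_w})$ handles the norm cokernel directly. In the totally ramified case the residue fields coincide, so the norm becomes multiplication by $p$ on $\tilde{E}(\kappa_v)$ and the cokernel has $\F_p$-dimension at most $1$, namely $\dim_{\F_p}\tilde{E}(\kappa_v)[p]$.

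For the formal-group piece, I would use that good ordinary reduction forces $\hat{E}$ to be height-one and, after base change to $\Oc^{\mathrm{ur}}_{K_v}$, isomorphic to a Lubin--Tate formal group (via the Serre--Tate canonical lift). Under this identification, the norm on $\hat{E}(\mk_{L_w})$ is governed by local class field theory, and the Tate cohomology of $\hat{E}(\mk_{L_w})$ for $\Gal{L_w}{L'_w}$ can be computed explicitly. Combining this with the reduction piece and verifying that the total $\F_p$-dimension is always even completes the argument.

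The main obstacle is the totally ramified (and therefore wildly ramified) case: any odd contribution from the reduction piece, arising when $\tilde{E}(\kappa_v)[p]\neq 0$, must be matched by a corresponding odd contribution from the formal-group piece. Pinning down this cancellation rigorously requires either an explicit Coleman-style norm computation on the Lubin--Tate formal group, or a careful local-duality argument via the Tate pairing between $\hat{E}(\mk_{L_w})$ and the relevant cohomology, and this is where the technical weight of the proof lies.
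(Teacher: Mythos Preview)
The paper does not give its own proof of this statement: it is quoted as Theorem~6.7 of \cite{MR} with no argument supplied, so there is nothing in the present paper to compare your proposal against directly.

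That said, your outline is in the right spirit and broadly consistent with the tools the paper later invokes for the closely related Proposition~\ref{arithp}: the formal-group filtration, the height-one Lubin--Tate description in the ordinary case, and the universal-norm machinery of Mazur \cite{MazurRP} and Lubin--Rosen \cite{LubRos} (packaged in \cite[App.~B]{MR}). What Mazur--Rubin actually do, at least in the non-anomalous case, is stronger than a parity count: their Propositions~B.2--B.3 show that the norm $E(L_w)\to E(L'_w)$ is \emph{surjective}, which kills $\delta_v$ outright. Your sketch aims instead for a parity cancellation between the reduction and formal-group contributions, which is a legitimate alternative route but, as you yourself flag, leaves the hard anomalous/wildly-ramified case as an unexecuted step.

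One technical caution: the snake lemma does not directly decompose $E(K_v)/\bigl(E(K_v)\cap N_{L_w/L'_w}E(L_w)\bigr)$ into formal-group and reduction pieces, because this quotient is not the cokernel of the norm but a subquotient cut out by intersection with $E(K_v)$ inside $E(L'_w)$. You would need either to first control the full cokernel $E(L'_w)/N E(L_w)$ and then pass to the subgroup, or to recast everything in terms of Tate cohomology $\hat H^0$ from the outset. As written, the proposal is a reasonable plan but not yet a proof.
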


For the corresponding situation on the analytic side, we have the following Proposition.

\begin{prop}
\label{analyticgoodred} 
 If $E$ has good reduction over $K_v$ then
 	$$W(E/k_u,\tau_{\rho,u})/W(E/k_u,\tau_{1,u})=1$$
and moreover their common value is 1 when $K_v/k_u$ is unramified.
\end{prop}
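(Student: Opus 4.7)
The plan is to apply Rohrlich's Theorem \ref{Ro2} to $(E/k_u, \tau_{\rho,u})$ and $(E/k_u, \tau_{1,u})$ and to compare the outputs. When $v^c \neq v$, Proposition \ref{analyticMR5.1} already gives $W(E/k_u, \tau_{\rho,u}) = W(E/k_u, \tau_{1,u}) = 1$, and since $K_v = k_u$ is trivially unramified in this case, both the ratio assertion and the ``moreover'' claim follow. We therefore reduce to the situation $v^c = v$: then $K_v/k_u$ is a quadratic extension, $\tau_{1,u} = 1 \oplus \eta_0$ with $\eta_0$ the quadratic character of $k_u^\times$ cutting out $K_v/k_u$, and $\tau_{\rho,u} = \ind{K_v}{k_u}{\rho_v}$, where $\rho_v$ is the restriction of $\rho$ to a decomposition group at $v$.

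The first step is to verify $c(\tau_{\rho,u}) = c(\tau_{1,u})$. The determinant formula for an induced representation gives $\det(\tau_{\rho,u}) = \eta_0 \cdot (\rho_v|_{k_u^\times})$, and since $\rho_v$ has $p$-power order with $p$ odd, $\rho_v(-1) = 1$; thus $c(\tau_{\rho,u}) = \eta_0(-1) = c(\tau_{1,u})$ in all remaining sub-cases. When $K_v/k_u$ is unramified, $\eta_0$ is the unramified quadratic character, trivial on units and in particular on $-1$, so this common value of $c(\tau)$ equals $1$. Moreover, good reduction of $E$ over $K_v$ together with the unramifiedness of $K_v/k_u$ forces good reduction of $E$ over $k_u$ itself by N\'eron--Ogg--Shafarevich, so Theorem \ref{Ro2}(i) gives $W(E/k_u, \tau) = c(\tau) = 1$ for both $\tau_{\rho,u}$ and $\tau_{1,u}$, settling the ``moreover'' claim.

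It remains to treat the case where $K_v/k_u$ is ramified quadratic; here $E/k_u$ has additive, potentially good reduction and $v(j_E) \geq 0$. The key observation is that the Weil group $\Wc(M/k_u)$ is automatically abelian in this situation: since $E$ acquires good reduction already over the quadratic extension $K_v$, the minimal extension $M$ of $k_u^{ur}$ over which $E$ has good reduction satisfies $M \subset K_v \cdot k_u^{ur}$, forcing $[M : k_u^{ur}] \leq 2$; as $E$ is not good over $k_u$ itself (else we are in the unramified sub-case), $M = K_v \cdot k_u^{ur}$, with $\Gal{M}{k_u^{ur}} \cong \Z/2\Z$. Frobenius must act trivially on this $\Z/2\Z$ (no nontrivial action is possible), so $\Wc(M/k_u) \cong \Z/2\Z \times \Z$ is abelian. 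Consequently, the abelian branch of Theorem \ref{Ro2}(iii) (when the residue characteristic $\ell \geq 5$) applies and gives $W(E/k_u, \tau) = c(\tau)$ for both representations; combined with $c(\tau_{\rho,u}) = c(\tau_{1,u})$ this yields the ratio $1$.

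The principal obstacle is the handling of the small residue characteristics $\ell \in \{2, 3\}$ in the ramified sub-case, which lie outside the scope of Theorem \ref{Ro2}(iii). One strategy is to invoke Proposition \ref{Ro7}: the abelianness of $\Wc(M/k_u)$ established above forces $\sigma_{E/k_u}$ to decompose as a sum of two characters of $k_u^\times$, and after checking the determinant normalization one obtains the self-dual form $\chi \oplus \chi^{-1}$, yielding $W(E/k_u, \tau) = c(\tau)$ uniformly. Alternatively, one may appeal to Rohrlich's refined case-by-case analyses for $\ell = 2, 3$, which under the above abelianness again produce the formula $W(E/k_u, \tau) = c(\tau)$; either route completes the argument.
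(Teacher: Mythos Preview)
Your argument is correct and in fact more thorough than the paper's. The paper's proof is a single line: it invokes Theorem~\ref{Ro2}(i) together with Lemma~\ref{anlemma1} (which is precisely your computation that $c(\tau_{\rho,u})=c(\tau_{1,u})$, with common value~$1$ in the unramified case). But Theorem~\ref{Ro2}(i) literally requires good reduction over the base field $k_u$, not merely over $K_v$; the paper does not isolate the sub-case where $K_v/k_u$ is ramified and $E/k_u$ is additive but becomes good over $K_v$. You do handle this, via the observation that $M\subset K_v\cdot k_u^{ur}$ forces $\Lambda\cong\Z/2\Z$ and hence $\Wc(M/k_u)$ abelian, after which Theorem~\ref{Ro2}(iii) (for $\ell\geq 5$) or Proposition~\ref{Ro7} (for $\ell\in\{2,3\}$, exactly as the paper itself argues in the proof of Proposition~\ref{analytic3}) gives $W(E/k_u,\tau)=c(\tau)$. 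So your route is genuinely different and covers strictly more ground.

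One minor inaccuracy: in the ramified sub-case you write ``here $E/k_u$ has additive, potentially good reduction,'' but nothing prevents $E$ from already having good reduction over $k_u$ when $K_v/k_u$ is ramified. That possibility is of course disposed of instantly by Theorem~\ref{Ro2}(i), so the omission is harmless, but the sentence as stated is not quite right. Everything else --- the determinant computation $\det(\tau_{\rho,u})=\eta_0\cdot(\rho_v\!\restrict{k_u^\times})$ with $\rho_v(-1)=1$, the N\'eron--Ogg--Shafarevich step in the unramified case, and the abelianness of $\Wc(M/k_u)$ --- is sound.
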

\begin{proof} Using part (i) of Theorem \ref{Ro2}, the result is given by the following lemma.\end{proof}

\begin{lemma}
\label{anlemma1} 
 We have $c(\tau_{\rho,u})=c(\tau_{1,u})$ and when $K_v/k_u$ is unramified we have $c(\tau_{\rho,u})=c(\tau_{1,u})=1.$
\end{lemma}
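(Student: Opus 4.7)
My plan is to reduce the lemma to a direct computation using the standard formula for the determinant of an induced representation, splitting into the two cases $v \neq v^c$ and $v = v^c$.

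First I would separate the easy case $v \neq v^c$. Here $K_v = k_u$, and the proof of Proposition \ref{analyticMR5.1} already records that $\tau_{\rho,u} = \rho_v \oplus \rho_v^{-1}$ and $\tau_{1,u} = 1 \oplus 1$ (using the dihedral relation $\rho^c = \rho^{-1}$). Taking determinants at $-1 \in k_u^\times$ gives $c(\tau_{\rho,u}) = \rho_v(-1)\rho_v^{-1}(-1) = 1 = c(\tau_{1,u})$, and since $K_v = k_u$ the extension $K_v/k_u$ is trivially unramified.

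The substantive case is $v = v^c$, where $K_v/k_u$ is a genuine quadratic extension and, by Mackey, $\tau_{\rho,u} = \ind_{K_v}^{k_u}(\rho_v)$ with $\rho_v := \rho|_{\Gal{\kb_u}{K_v}}$ (and similarly $\tau_{1,u} = \ind_{K_v}^{k_u}(1)$). For an index-$2$ induction of a character $\psi$ of $K_v^\times$ there is the standard determinant formula
$$\det(\ind_{K_v}^{k_u}\psi) = \chi_{K_v/k_u}\cdot(\psi\circ\Norm{K_v}{k_u}^{-1}),$$
where $\chi_{K_v/k_u}$ is the quadratic character of $k_u^\times$ cutting out $K_v$ (the sign character of the coset action) and $\psi\circ\text{Ver}$ corresponds under local class field theory to the restriction $\psi|_{k_u^\times}$ of $\psi$ to the subgroup $k_u^\times\subset K_v^\times$. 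Evaluating at $-1\in k_u^\times$ gives
$$c(\tau_{\rho,u}) = \chi_{K_v/k_u}(-1)\cdot\rho_v(-1),\qquad c(\tau_{1,u}) = \chi_{K_v/k_u}(-1).$$
The key observation is that $\rho_v(-1)=1$: indeed $\rho$ has order $p^n$ with $p>3$, so $\rho_v(-1)$ lies in $\muroots_{p^n}$, but also $(-1)^2=1$ forces $\rho_v(-1)\in\{\pm 1\}$, and the only element of $\muroots_{p^n}$ of order dividing $2$ is $1$. This gives $c(\tau_{\rho,u}) = c(\tau_{1,u}) = \chi_{K_v/k_u}(-1)$.

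Finally, when $K_v/k_u$ is unramified, $\chi_{K_v/k_u}$ is the unramified quadratic character of $k_u^\times$, hence trivial on the unit group $\Oc_{k_u}^\times$; since $-1$ is a unit we conclude $\chi_{K_v/k_u}(-1)=1$, completing the proof. The only real subtlety is justifying the determinant-of-induced-representation formula in the form above, which I would either cite from a standard source (for instance Serre's book on representations, or Tate's Corvallis article) or verify directly on the index-$2$ group $\Gal{L_w}{k_u}/\Gal{L_w}{K_v}$ by choosing a coset representative $c$ and computing the determinant of the $2\times 2$ induced matrix on a pair of generators.
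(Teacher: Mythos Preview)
Your proof is correct, but it takes a different route from the paper for the first claim. The paper argues by a congruence trick: since $\rho$ takes values in $p^n$-th roots of unity, one has $\rho\equiv 1\pmod{\pp}$ for any prime $\pp$ above $p$, hence $\tau_{\rho,u}\equiv\tau_{1,u}\pmod{\pp}$ as matrix-valued functions, so $\det\tau_{\rho,u}\equiv\det\tau_{1,u}\pmod{\pp}$; as both values at $-1$ lie in $\{\pm1\}$ and $p>2$, they must agree. You instead invoke the explicit determinant-of-induction formula $\det(\ind{K_v}{k_u}{\psi})=\chi_{K_v/k_u}\cdot(\psi\!\restriction_{k_u^\times})$ and then use that $\rho$ has odd order to force $\rho_v(-1)=1$. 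Your argument has the advantage of actually identifying the common value as $\chi_{K_v/k_u}(-1)$, which makes the second (unramified) claim immediate; the paper's congruence argument is shorter and avoids citing or verifying the induced-determinant formula, but then has to separately unwind $\det\tau_{1,u}$ via norm groups to handle the unramified case. Both approaches ultimately exploit the same underlying fact that $p$ is odd.
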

\begin{proof} Notice that $\dim{}\tau_{\rho,u}=\dim{}\tau_{1,u}=2$. We wish to see that $\det{\tau_{\rho,u}}(-1)$ and $\det{\tau_{1,u}}(-1)$ are equal. Since we know that these terms only take the values $\pm 1$, it suffices to see that $\det{\tau_{\rho,u}}\equiv \det{\tau_{1,u}} \mod{\pp}$ for some $\pp\mid p$. Fixing a basis for the spaces of $\rho$ and $1$ respectively, we have that $\rho\equiv 1\mod{\pp}$ from the assumption that $L/K$ is a cyclic $p$-power extension. This implies $\tau_{\rho,u}\equiv\tau_{1,u}\mod{\pp}$, viewed as matrices with function-valued entries and the congruence defined component-wise. It then follows that $\det{\tau_{\rho,u}}\equiv\det{\tau_{1,u}}\mod{\pp}.$\\
\indent Now, let $\tau=\tau_{1,u}$. To determine $\det{\tau}(-1)$, first note that $-1$ is a unit of $k_u$, and when $K_v/k_u$ is unramified, 
$$-1=\Norm{K_v}{k_u}(\beta)\in\Norm{K_v}{k_u}\Oc^{\times}_{K_v}\subset\Norm{K_v}{k_u}\Kx_v .$$
Viewing $\det{\tau}$ as a character of $\kx_u/\Norm{L_w}{k_u}\Lx_w$, we see $\det{\tau}(-1)$ is trivial.
\end{proof}

\subsection{Potential Multiplicative Reduction}
\label{comparingmultred}
In this section, we assume that the prime $v$ of $K$ ramifies in $L/K$ , i.e. that $L_w/K_v$ is non-trivial since $[L_w:K_v]$ is divisible by the ramification degree of $v$ in $L/K$, and satisfies $v^c=v$.

\subsubsection{Analytic}
\begin{prop}
\label{anmult}
If $E/k_u$ has potential multiplicative reduction, then $$W(E/k_u,\tau_{\rho,u})/W(E/k_u,\tau_{1,u})=1$$
if and only if $E$ does not have split multiplicative reduction over $K_v$.
\end{prop}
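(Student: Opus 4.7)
The plan is to apply part (ii) of Theorem \ref{Ro2} to both $\tau_{\rho,u}$ and $\tau_{1,u}$. Since $E/k_u$ has potential multiplicative reduction we have $v(j)<0$, so in both cases
	$$W(E/k_u,\tau)=c(\tau)(-1)^{\ip{\chi}{\tau}},$$
where $\chi$ is the (possibly trivial) quadratic character of $k_u^\times$ cut out by the minimal extension of $k_u$ over which $E$ acquires split multiplicative reduction. By Lemma \ref{anlemma1} the $c(\tau)$-factors cancel in the ratio, leaving the problem of determining the parity of $\ip{\chi}{\tau_{\rho,u}}-\ip{\chi}{\tau_{1,u}}$.

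The assumption $v^c=v$ makes $v$ the unique prime of $K$ above $u$, and Mackey's formula presents the two restrictions as single inductions from $K_v$:
	$$\tau_{\rho,u}=\ind{K_v}{k_u}{\rho_v},\qquad \tau_{1,u}=\ind{K_v}{k_u}{1_{K_v}},$$
where $\rho_v$ denotes the restriction of $\rho$ to a decomposition subgroup at $v$. Frobenius reciprocity then converts the two inner products into $\ip{\chi\restrict{K_v}}{\rho_v}$ and $\ip{\chi\restrict{K_v}}{1_{K_v}}$ respectively. Since $\chi\restrict{K_v}$ has order dividing $2$ while $\rho_v$ has $p$-power order (with $p>3$) and is nontrivial (because $v$ ramifies in $L/K$ forces $L_w\neq K_v$), the first of these vanishes. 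The desired ratio therefore equals $1$ exactly when $\chi\restrict{K_v}$ is nontrivial.

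The final step is to translate this into a statement about the reduction of $E/K_v$. Via local class field theory, $\chi\restrict{K_v}$ corresponds to the character $\chi\circ\Norm{K_v}{k_u}$ of $K_v^\times$, which is trivial iff the extension $F_\chi/k_u$ cut out by $\chi$ is contained in $K_v$. By the remark following Theorem \ref{Ro2}, $F_\chi$ is precisely the (at most quadratic) extension of $k_u$ over which $E$ attains split multiplicative reduction, so $F_\chi\subseteq K_v$ is equivalent to $E/K_v$ having split multiplicative reduction, and the proposition follows.

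The main obstacle I anticipate is bookkeeping rather than hard analysis: handling the three reduction types of $E/k_u$ (split multiplicative, nonsplit multiplicative, and additive-with-potentially-multiplicative) uniformly through the single character $\chi$, and correctly identifying restriction of $\chi$ on the Galois side with composition with $\Norm{K_v}{k_u}$ on the classfield side. Once these identifications are in place, the argument reduces to a short orthogonality computation.
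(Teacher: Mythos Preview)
Your proof is correct and follows essentially the same approach as the paper: apply Theorem~\ref{Ro2}(ii), cancel the $c(\tau)$ factors via Lemma~\ref{anlemma1}, show $\ip{\chi}{\tau_{\rho,u}}=0$, and reduce the question to whether $\chi$ is trivial on $K_v$. The only cosmetic difference is that you handle both inner products uniformly via Frobenius reciprocity to $K_v$, whereas the paper argues directly that $\tau_{\rho,u}$ is an irreducible $2$-dimensional representation (hence orthogonal to any character) and decomposes $\tau_{1,u}=1\oplus\mu$ with $\mu$ the character of $K_v/k_u$; these are equivalent computations.
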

\begin{proof}
If $E$ has potential multiplicative reduction then the relevant formula is that of Theorem \ref{Ro2} part (ii). Applying Lemma \ref{anlemma1}, it remains to determine $\ip{\chi}{\tau}$.\\
\indent First, if $E$ has split multiplicative reduction at $u$, $\chi$ is the trivial character, and if not then $\chi$ is the character associated to $k_u(\sqrt{-c_6})$. When $\tau=\tau_{\rho,u}$, the assumption $L_w\neq K_v$ implies that $\tau$ is irreducible (see \cite[5.3]{SerreRep}), and since $\dim{}\tau=2$ and $\dim{}\chi=1$, we have that $\tau$ is not isomorphic to $\chi$. Thus, we see that $\ip{\chi}{\tau}=0$.\\
\indent Now consider $\tau=\tau_{1,u}$. As $\tau$ is obtained by inducing the trivial character from $\Gal{L}{K}$ to $\Gal{L}{k}$, it decomposes as $\tau=1\oplus \mu$, with $\mu$ the character associated to the extension $K_v/k_u$. In the case $E$ has split multiplicative reduction at $u$, $\chi=1\not\cong\mu$, by our assumption $v^c=v$, and so $\ip{\chi}{\tau}=1$. Note that in this case, $E$ still has split multiplicative reduction at $v$. For the other cases, $\chi\cong\mu$ if and only if $K_v/k_u$ is the quadratic extension over which $E$ acquires split multiplicative reduction. Combining these, we see that $\ip{\chi}{\tau}=1$ if and only if $E$ has split multiplicative reduction at $v$. Thus, by the formula given in Theorem \ref{Ro2} part (ii), $W(E/k_u,\tau_{1,u})=-1$ if and only if $E$ has split multiplicative reduction at $v$.
\end{proof}

\subsubsection{Arithmetic}
\begin{prop}
\label{arithmult} 
 If $E$ has potential multiplicative reduction over $k_u$, then $\delta_v\equiv 0$ if and only if $E$ does not have split multiplicative reduction over $K_v$.
\end{prop}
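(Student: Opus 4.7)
The plan is to apply the Tate uniformization of $E$ and reduce $\delta_v$ to a local class field theory problem, handled case by case on the reduction type of $E/K_v$.

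\emph{Split multiplicative over $K_v$.} There is a Tate parameter $q$ with $E(F)\cong F^\times/q^\Z$ for every algebraic extension $F/K_v$; since $E$ is defined over $k_u$, in every subcase one can arrange $q\in k_u^\times$, so $q^c=q$. With $G=\Gal{L_w}{L'_w}$ (cyclic of order $p$), the Tate cohomology of $0\to q^\Z\to L_w^\times\to E(L_w)\to 0$, together with Hilbert 90 and $H^1(G,\Z)=0$, yields
\[E(L'_w)/\Norm{L_w}{L'_w}E(L_w)\;\cong\;(L'_w)^\times\big/\bigl(\Norm{L_w}{L'_w}(L_w^\times)\cdot q^\Z\bigr),\]
a group of order dividing $p$. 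Hence $\delta_v$ is the parity of the $\F_p$-dimension of the image of $K_v^\times$ in this quotient. I then establish (a) $q\in\Norm{L_w}{L'_w}(L_w^\times)$ using Galois equivariance of Artin reciprocity together with the dihedral relation $c\sigma c^{-1}=\sigma^{-1}$ on $\Gal{L_w}{K_v}$: these force the Artin symbol of $q$ to square to the identity, and since $\Gal{L_w}{K_v}$ has odd $p$-power order, the symbol is trivial, so $q\in\Norm{L_w}{K_v}(L_w^\times)\subset\Norm{L_w}{L'_w}(L_w^\times)$. And (b) $K_v^\times\not\subset\Norm{L_w}{L'_w}(L_w^\times)$ follows from the transfer compatibility of Artin reciprocity, since on a cyclic $p$-group the transfer onto the unique order-$p$ subgroup is surjective. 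Combining (a) and (b) yields $\delta_v\equiv 1$.

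\emph{Non-split multiplicative or additive potentially multiplicative over $K_v$.} Here $E$ becomes a split Tate curve only over a quadratic extension $F/K_v$, and is the $\chi$-twist of that curve, where $\chi$ is the character associated to $F/K_v$. Because $p>2$, $F$ is linearly disjoint from $L_w$ over $K_v$, so $FL_w/FL'_w$ is cyclic of order $p$ and the split-case computation applies to the split Tate curve over $F$. Descending to $K_v$ by taking $\chi$-eigenspaces, the image of $E(K_v)$ in the resulting norm quotient lies in the eigenspace where the transfer-surjectivity argument of (b) does not apply, giving $\delta_v\equiv 0$.

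The principal technical obstacle is the twisted-descent step: tracking the $\chi$-eigenspaces through the Tate cohomology long exact sequence carefully enough to confirm that the eigenspace giving $\delta_v$ is even-dimensional. The split case itself is clean given the dihedral-plus-odd-order argument in (a) and the standard transfer-surjectivity in (b).
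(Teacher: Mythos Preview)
Your split-multiplicative case is correct and is essentially the argument the paper invokes (it cites Lemma~8.4 of \cite{MR}): Tate uniformization reduces to a statement about $(L'_w)^\times/\Norm{L_w}{L'_w}(L_w^\times)$, the dihedral relation together with $q\in k_u^\times$ forces $q$ to be a norm, and then local class field theory plus the transfer computation gives $\delta_v\equiv 1$.

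The gap is in the non-split case. You have the right setup: pass to the quadratic extension $F/K_v$ over which $E$ becomes a split Tate curve, identify $E(L_w)$ with the $\chi$-eigenspace (minus-part) of $E'(FL_w)$, and reduce to the $\chi$-eigenspace of the norm quotient
\[
(FL'_w)^\times\big/\Norm{FL_w}{FL'_w}(FL_w)^\times \;\cong\; \Gal{FL_w}{FL'_w}.
\]
But you stop there, labeling this the ``principal technical obstacle'' and saying only that ``the transfer-surjectivity argument of (b) does not apply.'' That is not an argument: the failure of one mechanism to produce a nonzero class does not show the eigenspace is zero.

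The paper's resolution is short and is precisely the point you are missing. The twisting involution $\sigma$ (a generator of $\Gal{FL_w}{L_w}$) and the group $\Gal{FL_w}{FL'_w}\cong\Gal{L_w}{L'_w}$ both lie inside $\Gal{FL_w}{K_v}$, and this last group is \emph{abelian}, being the Galois group of the compositum of the two cyclic extensions $F/K_v$ and $L_w/K_v$. Hence $\sigma$ commutes with $\Gal{L_w}{L'_w}$, so the minus-part $\Gal{L_w}{L'_w}^{-}$ is trivial. Since the local reciprocity isomorphism is Galois-equivariant, the minus-part of the norm quotient is therefore zero; this gives $E(L'_w)=\Norm{L_w}{L'_w}E(L_w)$ outright, and in particular $\delta_v\equiv 0$. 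Note that the transfer plays no role here; the mechanism is genuinely different from your step~(b).
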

\begin{proof}
 Since $E$ has potential multiplicative reduction, $E$ has split multiplicative reduction over $k_u$ or obtains split multiplicative reduction over a quadratic extension $H/k_u$ (see \cite[\S C.14]{Silv}). The latter case splits naturally into $H=K_v$ and $H\neq K_v$. In the case $E$ has split multiplicative reduction over $k_u$, $E$ has split multiplicative reduction over any finite extension of $k_u$ (loc. cit. and \cite[\S VII.5]{Silv}), and so the arguments with $H=K_v$ covers this case.\\
\indent Suppose first that $H=K_v$. In this case there is a $q\in\kx_u$ such that $E(L_w)\cong\Lx_w/q^{\Z}$ as $\Gal{L_w}{K_v}$-modules, and with the isomorphism defined over $K_v$ (loc. cit. \cite{Silv}). This case is Lemma 8.4 of \cite{MR}.\\
\indent Suppose now that $H\neq K_v$. Let $L'_w$ be defined by $K_v\subset L'_w\subset L_w$ with $[L_w:L'_w]=p$. Consider the diagram of fields
 $$\xymatrix@R=5pt
  {  & & & HL_w\ar@{-}[ddr]\ar@{-}[dl] & \\
     & & HL'_w\ar@{-}[ddr]\ar@{-}[dl] & & \\
     & HK_v\ar@{-}[ddr]\ar@{-}[dl] & & & L_w\ar@{-}[dl]_p\\
   H\ar@{-}[ddr]^2 & & & L'_w\ar@{-}[dl]& \\
     & & K_v\ar@{-}[dl]_2 & & \\
     & k_u & & & & \\
  }$$
and denote $\Delta:=\Gal{H}{k_u}=\langle\sigma\rangle$ and $G=\Gal{HL_w}{HL'_w}=\langle\tau\rangle$. Define $E'$ to be the quadratic twist of $E$ associated to $H/k_u$, so that $E'$ has split multiplicative reduction at $u$, and $E\stackrel{\phi}{\to} E'$ is an isomorphism over $H$. As before, we have a $\Gal{HL_w}{k_u}$-isomorphism $E'(HL_w)\stackrel{\lambda}{\to} H\Lx_w/q^\Z$, with $q\in\kx$. Define the minus-part of $H\Lx_w$ to be 
 $$(H\Lx_w)^-:=\braces{z\in H\Lx_w ~:~ z^\sigma=z^{-1}},$$
and similarly for all other $\Delta$-modules. The map obtained by pre-composing $\lambda$ with $\phi$ restricts to 
 $$E(L_w)\stackrel{\phi}{\longrightarrow} E'(HL_w)^-\stackrel{\lambda}{\longrightarrow}((H\Lx_w)/q^\Z)^-.$$
 If $q\not\in\Norm{HL_w}{L_w}$ then we also have $((H\Lx_w)/q^\Z)^-\cong(H\Lx_w)^-$. If $q\in\Norm{HL_w}{L_w}$ then the projection of $(H\Lx_w)^-$ has index 2 in $((H\Lx_w)/q^\Z)^-$, and so the index is prime to $p$. Both cases will be similar, so we will proceed with the case 
 	$$E(L_w)\to((H\Lx_w)/q^\Z)^-\cong(H\Lx_w)^-.$$
One has a similar situation with $E(L'_w)\to (HL'^{\times}_w)^-.$ From the commutative square
 $$\xymatrix@R=30pt@C=30pt
 	{ E(L_w)\ar[r]^{\sim}\ar[d]^{N} & (H\Lx_w)^-\ar[d]^{N} \\
 	  E(L'_w)\ar[r]^{\sim} & (HL'^{\times}_w)^- \\
 	}$$
 with $N:=\Norm{HL_w}{HL'_w}$, the snake lemma gives 
 $$[E(L'_w):N(E(L_w))]=[(HL'^{\times}_w)^-:N((H\Lx_w)^-)].$$
 We claim that this index is 1, implying $E(K_v)\subseteq E(L'_w)=N(E(L_w))$ and hence
 $$\dim{\F_p}E(K_v)/(E(K_v)\cap\Norm{L_w}{L'_w}E(L_w))=0.$$
 
 It remains to see that $[(HL'^{\times}_w)^-:N((H\Lx_w)^-)]=1.$ With $N$ as above, local class field theory gives an injection
 	$$((HL'_w)^\times)^-/N((H\Lx_w)^-)\into \Gal{HL_w}{HL'_w}=\Gal{L_w}{L'_w}^-.$$
 Since we know that $\sigma$ conjugates $\Gal{L_w}{L'_w}$ trivially, it must be that $\Gal{L_w}{L'_w}^-$ is trivial, finishing the proof.
\end{proof}

\subsection{Potential Good Reduction}
\label{comparingpotgoodred}
In this section, we assume that $v^c=v$ and that $v$ is ramified in $L/K$, so $L_w\neq K_v$ as before.

\subsubsection{Analytic}
\label{potgoodanalytic}
Let $u$, $v$, and $w$ be as in \S\ref{arithmeticlocalconstants}. Denote $\ell$ for their common residue characteristic, and suppose $E/k_u$ has additive and potentially good reduction. Throughout we set $H$ to be the unique unramified quadratic extension of $k_u$.\\
\indent For $\kappa$ a local field, recall $M$ is the minimal extension of $\kappa^{ur}$ over which $E$ acquires good reduction. Denote $\Lambda\cong \Gal{M}{\kappa^{ur}}$ for inertia subgroup (alternatively, $\Lambda$ is the image of the inertia subgroup of $\Wc(\kb/k)$ in $\Wc(M/k)=\Wc(\kb/k)/\Gal{\kb}{M}$).\\
\indent For $\ell\geq 5$, Serre and Tate \cite{ST} prove that $\Lambda=\Gal{M}{k_u^{ur}}$ is always a cyclic group with order dividing 12. From this, Rohrlich is able to prove that the representation $\sigma_{E/k_u}$ attached to $E$ is, after some slight modifications (see p. 332 of \cite{RohrGal}), abelian or induced from a character of the unramified extension $H/k_u$. This property of $\sigma_{E/k_u}$ is essential in Rohrlich's formulae in Theorem \ref{Ro2} for the local root number associated to $E/k_u$ and $\tau$.\\ 
\indent This requirement on $\sigma_{E/k_u}$ is also the main obstruction to applying Rohrlich's techniques to determine the analytic root number over local fields of residue characteristic 2 or 3. The work of Kobayashi \cite{Kob} for $\ell=3$ shows that $\sigma_{E/k_u}$ is \emph{not} in general induced from an unramified quadratic character in certain cases, and hence one cannot apply Theorem \ref{Ro2}. Dokchitser and Dokchitser \cite{DDRes2}, and also Whitehouse \cite{White}, prove that for $\ell=2$ similar obstructions occur. In Proposition \ref{analytic3} below, we are required to make assumptions on $\Lambda$ in order to have a formula for $W(E/k_u,\tau)$. Results of Kraus \cite{Kraus} (see also \cite{White}) give some context to these assumptions, as well as a method of determining $\Lambda$ for a given elliptic curve. 

\begin{prop}
\label{analytic5unram} 
 Suppose $v\nmid 6$. If $v\nmid p$ or $K_v/k_u$ is unramified then $$W(E/k_u,\tau_{\rho,u})/W(E/k_u,\tau_{1,u})=1.$$
\end{prop}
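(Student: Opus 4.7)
The plan is to apply Theorem \ref{Ro2}(iii), which is available because $v\nmid 6$ ensures residue characteristic $\ell\geq 5$ and the assumed potentially good reduction gives $v(j)\geq 0$. By Lemma \ref{anlemma1}, $c(\tau_{\rho,u})=c(\tau_{1,u})$, so these factors cancel in the ratio. The abelian case for $\Wc(M/k_u)$ is immediate (the ratio reduces to $c(\tau_{\rho,u})/c(\tau_{1,u})=1$), so the substantive task is to show, in the non-abelian case, that
\[
S(\tau):=\ip{1}{\tau}+\ip{\eta}{\tau}+\ip{\hat\sigma}{\tau}
\]
has the same parity for $\tau=\tau_{\rho,u}$ and $\tau=\tau_{1,u}$.

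First I would set up the basic decompositions. Since $v^c=v$, we have $\tau_{1,u}=1\oplus\mu$, where $\mu$ is the quadratic character of $\Gal{K_v}{k_u}$. The induced representation $\tau_{\rho,u}=\ind{K_v}{k_u}{\rho_v}$ is irreducible: the dihedral hypothesis gives $\rho_v^c=\rho_v^{-1}$, and these differ because $\rho_v$ has non-trivial odd $p$-power order (non-trivial since $v$ ramifies in $L/K$). Likewise $\hat\sigma=\ind{H}{k_u}{\phi\theta}$ is irreducible by $\phi^2\neq 1_H$.

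Next, compute the inner products. Frobenius reciprocity yields $\ip{\chi}{\tau_{\rho,u}}=\ip{\chi|_{K_v}}{\rho_v}$, which vanishes for $\chi\in\{1,\eta\}$ because $\rho_v$ has odd $p$-power order while $1|_{K_v}$ and $\eta|_{K_v}$ have order dividing $2$. For $\chi=\hat\sigma$, Mackey and Frobenius reduce $\ip{\hat\sigma}{\tau_{\rho,u}}$ to the pairing of $\phi\theta$ and $\rho_v$ restricted to a common subgroup; the coprimality of $p$ with $|\Lambda|\mid 12$ (Serre--Tate, valid since $\ell\geq 5$) forces the pairing to be zero. On the trivial side, $\ip{1}{\tau_{1,u}}=1$, $\ip{\eta}{\tau_{1,u}}=1$ iff $\mu=\eta$ (equivalently, iff $K_v$ is the unramified quadratic extension $H$), and $\ip{\hat\sigma}{\tau_{1,u}}=0$ by irreducibility of $\hat\sigma$ and a dimension count. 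Collecting, $S(\tau_{\rho,u})\equiv 0$ and $S(\tau_{1,u})\equiv 1+[\mu=\eta]\pmod{2}$, so the ratio is $1$ precisely when $K_v/k_u$ is unramified --- which covers the second clause of the proposition.

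The remaining clause --- $v\nmid p$ with $K_v/k_u$ ramified --- is the main obstacle. In this regime the $p$-power ramification of $\rho_v$ is tame (since $\ell\neq p$) and is structurally decoupled from the arithmetic data of $E$ controlling $\hat\sigma$. I would either show that these hypotheses force $\Wc(M/k_u)$ into the abelian case (returning us to the easy case already handled), or else exploit the projection formula $W(\ind{H}{k_u}{\phi}\otimes\tau)=W(\ind{H}{k_u}{(\phi\otimes\tau|_H)})$ together with an inductivity argument on local epsilon factors to bypass Theorem \ref{Ro2} and produce the required cancellation directly from the fact that $\rho_v$ and $\sigma_{E/k_u}$ have disjoint support in the ramification filtration.
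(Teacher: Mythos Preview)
Your treatment of the case $K_v/k_u$ unramified is correct and matches the paper's argument. The gap is in what you call ``the remaining clause --- $v\nmid p$ with $K_v/k_u$ ramified.'' You present this as the main obstacle and offer only sketches (force $\Wc(M/k_u)$ abelian, or appeal to inductivity of epsilon factors), neither of which you carry out. In fact this case is \emph{empty}: under the standing hypotheses of \S\ref{potgoodanalytic} (namely $v^c=v$ and $v$ ramifies in $L/K$), the condition $v\nmid p$ already forces $K_v/k_u$ to be unramified. This is exactly Lemma 6.5(ii) of \cite{MR}, which the paper invokes in the very first sentence of its proof.

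The reason is elementary: since $v\nmid p$, the inertia subgroup $I\subseteq\Gal{L_w}{k_u}$ is tame, hence cyclic. Because $v$ ramifies in $L/K$, $I$ contains a nontrivial element of $p$-power order. But in the dihedral group of order $2p^n$, every element outside the normal cyclic subgroup $\Gal{L}{K}$ has order $2$, so any cyclic subgroup containing an element of odd order $>1$ must lie entirely inside $\Gal{L}{K}$. Hence $I\subseteq\Gal{L_w}{K_v}$, i.e.\ $K_v/k_u$ is unramified. With this observation both clauses of the proposition reduce to the single case you have already handled, and no further argument is needed.
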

\begin{proof}
 Here, we use the notation of Theorem \ref{Ro2}, and from $v\nmid 6$, we are in the case $\ell\geq 5$. If $v\nmid p$ then Lemma 6.5 of \cite{MR} shows that $K_v/k_u$ is unramified, and if $v\mid p$ then we have assumed $K_v/k_u$ is unramified. As $E$ has potentially good reduction, we have $v(j)\geq 0$, and so part (iii) of Theorem \ref{Ro2} applies. First we notice that $\ip{1}{\tau}+\ip{\eta}{\tau}\equiv 0\mod{2}$. Specifically, $\ip{1}{\tau}=\ip{\eta}{\tau}=0$ when $\tau=\tau_{\rho,u}$ is induced from $\rho$, and since $K_v/k_u$ is unramified, $\ip{1}{\tau}=\ip{\eta}{\tau}=1$ when $\tau=\tau_{1,u}$ is induced from 1. It remains to determine $\ip{\hat{\sigma}}{\tau}$.\\
\indent We know that $\hat{\sigma}$ is the representation of $\Gal{\bar{k}_u}{k_u}$ induced from $\psi:=\phi\theta$, and Rohrlich proves that $\hat{\sigma}$ is dihedral. Recall that $\psi$ is defined to be a character of $H^\times$, and by our assumption that $K_v/k_u$ is unramified, $H^\times=\Kx_v$. If $\psi$ has order $e$, we may view it as a character of $\Gal{K_1}{K_v}$ for some extension $K_1/K_v$ of degree $e$. In turn, we view $\hat{\sigma}$ as a dihedral representation of $\Gal{K_1}{k_u}$.\\
\indent Consider $\tau=\tau_{\rho,u}$. Lifting (\emph{not} inducing) $\hat{\sigma}$ and $\tau$ to some appropriate extension $K_2/k_u$, both are 2-dimensional dihedral representations. Since $\tau$ is irreducible, $\ip{\hat{\sigma}}{\tau}=1$ if and only if $\hat{\sigma}\cong \tau$. Restricting $\tau$ and $\hat{\sigma}$ to $\Gal{K_2}{K_v}$, these representations decompose as $\tau=\rho\oplus\rho^c$ and $\hat{\sigma}=\psi\oplus\psi^c$. The order of $\rho$ is a power of $p\geq 5$ and the order of $\psi$ is 3, 4, or 6 (see \cite[p. 332]{RohrGal}), so $\ip{\hat{\sigma}}{\tau}=0$.\\
\indent For $\tau=\tau_{1,u}$, we have $\tau=1\oplus\eta$ and so $\ip{\hat{\sigma}}{\tau}=0$. Thus, in either case one has
 $$(-1)^{\ip{1}{\tau}+\ip{\eta}{\tau}+\ip{\hat{\sigma}}{\tau}}=(-1)^{\ip{\hat{\sigma}}{\tau}}=1,$$
and applying Lemma \ref{anlemma1} gives $W(E/k_u,\tau_{\rho,u})/W(E/k_u,\tau_{1,u})=1.$
\end{proof}
\begin{prop}
\label{analytic5ram}
 Suppose $v\nmid 6$ and $K_v/k_u$ is ramified. If $E$ acquires good reduction over an abelian extension of $K_v$, then
 	$$W(E/k_u,\tau_{\rho,u})/W(E/k_u,\tau_{1,u})=1.$$
\end{prop}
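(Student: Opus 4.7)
The strategy parallels the proof of Proposition \ref{analytic5unram}: since $v\nmid 6$ we are in residue characteristic $\ell\geq 5$, and $E$ having potential good reduction gives $v(j)\geq 0$, so Theorem \ref{Ro2}(iii) governs $W(E/k_u,\tau)$ for both $\tau=\tau_{\rho,u}$ and $\tau=\tau_{1,u}$. By Lemma \ref{anlemma1} we have $c(\tau_{\rho,u})=c(\tau_{1,u})$, so the $c(\tau)$ factor cancels in the ratio and the entire question reduces to comparing the exponents
$$\ip{1}{\tau}+\ip{\eta}{\tau}+\ip{\hat{\sigma}}{\tau} \pmod{2}$$
for $\tau=\tau_{\rho,u}$ and $\tau=\tau_{1,u}$, in the case that $\mathcal{W}(M/k_u)$ is non-abelian (the abelian case is immediate, since then Theorem \ref{Ro2}(iii) gives $W(E/k_u,\tau)=c(\tau)$ for both $\tau$).

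First I would handle the easy inner products by Frobenius reciprocity. Since $\rho$ has order $p^n$ with $p\geq 5$, the characters $1$ and $\eta$ (each of order dividing $2$) cannot appear in $\tau_{\rho,u}=\ind{K_v}{k_u}{\rho}$, giving $\ip{1}{\tau_{\rho,u}}=\ip{\eta}{\tau_{\rho,u}}=0$. For $\tau_{1,u}=1\oplus\mu$, where $\mu$ is the quadratic character cutting out $K_v/k_u$, the hypothesis that $K_v/k_u$ is ramified forces $\mu\neq\eta$, so $\ip{1}{\tau_{1,u}}=1$ and $\ip{\eta}{\tau_{1,u}}=0$.

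Next I would exploit the abelian-extension hypothesis. By Rohrlich's structure theorem for $\ell\geq 5$, the assumption that $\mathcal{W}(M/k_u)$ is non-abelian says $\sigma_{E/k_u}$ is the irreducible dihedral representation $\text{Ind}_{W_H}^{W_{k_u}}\phi$. Since $K_v\neq H$ (the former is ramified and the latter unramified over $k_u$), Mackey's formula gives $\sigma_{E/k_u}\restrict{W_{K_v}}=\text{Ind}_{W_{HK_v}}^{W_{K_v}}(\phi\restrict{W_{HK_v}})$. The hypothesis that $E$ acquires good reduction over an abelian extension of $K_v$ translates into $\sigma_{E/K_v}$ being abelian, which via the Mackey decomposition is equivalent to $\phi\restrict{W_{HK_v}}$ being fixed by $\Gal{HK_v}{K_v}$. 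An identical Mackey computation for $\hat{\sigma}=\text{Ind}_{W_H}^{W_{k_u}}(\phi\theta)$ then reduces $\ip{\hat{\sigma}}{\tau_{\rho,u}}$ and $\ip{\hat{\sigma}}{\tau_{1,u}}$ (via Frobenius reciprocity) to pairings of $\phi\theta\restrict{W_{HK_v}}$ against $\rho$ and against the trivial character of $W_{HK_v}$, respectively. The order mismatch (powers of $p\geq 5$ versus $\text{ord}(\phi\theta)\in\{3,4,6\}$) eliminates the $\rho$ pairing, and I would use the invariance of $\phi\restrict{W_{HK_v}}$ together with the known ramification behaviour of $\theta$ on $W_{HK_v}$ to pin down the remaining pairing $\ip{\hat{\sigma}}{\tau_{1,u}}$ and verify that the total exponent is even.

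The main obstacle is this last step: correctly tracking the interaction between $\phi\restrict{W_{HK_v}}$, $\theta\restrict{W_{HK_v}}$, and the characters $1,\mu$ of $W_{K_v}$, and showing that the contribution from $\ip{\hat{\sigma}}{\tau_{1,u}}$ compensates for the $\ip{1}{\tau_{1,u}}=1$ contribution. Once this is verified, combining with the cancellation from Lemma \ref{anlemma1} yields $W(E/k_u,\tau_{\rho,u})/W(E/k_u,\tau_{1,u})=1$, as claimed.
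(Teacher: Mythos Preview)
Your case split is the wrong move, and the non-abelian branch cannot be salvaged. The paper's proof is a single observation: for $\ell\geq 5$, the hypothesis that $E$ acquires good reduction over an abelian extension of $K_v$ is \emph{equivalent} (via \cite[Prop.~2]{RohrVar}) to $\Wc(M/k_u)$ being abelian. Hence only the abelian branch of Theorem~\ref{Ro2}(iii) is ever in play, $W(E/k_u,\tau)=c(\tau)$ for both $\tau$, and Lemma~\ref{anlemma1} finishes. You correctly note that the abelian case is immediate; the entire content of the proof is recognizing that the hypothesis \emph{forces} this case.

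Your attempt at the non-abelian branch actually shows why this equivalence is essential. The representation $\hat{\sigma}$ is irreducible of dimension $2$, while $\tau_{1,u}=1\oplus\mu$ is a sum of two characters; hence $\ip{\hat{\sigma}}{\tau_{1,u}}=0$ unconditionally, and there is no way for it to ``compensate'' for $\ip{1}{\tau_{1,u}}=1$. With your (correct) computations $\ip{\eta}{\tau_{1,u}}=0$ and $\ip{1}{\tau_{\rho,u}}=\ip{\eta}{\tau_{\rho,u}}=\ip{\hat{\sigma}}{\tau_{\rho,u}}=0$, the non-abelian formula would give the ratio $-1$, not $1$. So the non-abelian case is not merely unnecessary under the hypothesis---if it could occur, the proposition would be false. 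The Mackey/invariance argument you sketch for $\phi\restrict{W_{HK_v}}$ is therefore aimed at the wrong target: what it really encodes is exactly the passage from ``abelian over $K_v$'' to ``$\Wc(M/k_u)$ abelian,'' and once you have that you never need to touch $\hat{\sigma}$ at all.
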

\begin{proof}
 Here $\ell\geq 5$, so we are in case (iii) of Theorem \ref{Ro2}, and the condition that $E$ acquires good reduction over an abelian extension of $K_v$ is equivalent to (see \cite[Prop 2]{RohrVar}) $\Weil{M}{k_u}$ being abelian. This gives
 $$W(E/k_u,\tau)=c(\tau)=\det{\tau}(-1).$$
 Applying the first claim of Lemma \ref{anlemma1} then gives the result.
\end{proof}

\begin{prop}
\label{analytic3} 
 If $v\mid 6$ and $\Lambda\cong\Z/e\Z$ for $e=1,2,3,4$ or $6$ then $$W(E/k_u,\tau_{\rho,u})/W(E/k_u,\tau_{1,u})=1.$$
\end{prop}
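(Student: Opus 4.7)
My plan is to argue in parallel with Propositions \ref{analytic5unram} and \ref{analytic5ram}, using the cyclicity and small order of $\Lambda$ as a substitute for the Serre-Tate structural result that was automatic when $\ell \geq 5$. The assumption $\Lambda \cong \Z/e\Z$ with $e \in \{1,2,3,4,6\}$ is precisely what is needed (via the results of Kraus cited in the discussion preceding the statement) to conclude that the canonical Weil-group representation $\sigma = \sigma_{E/k_u}$ is either abelian or of the form $\ind{H}{k_u}{\phi}$ for a tamely ramified character $\phi$ of $H^\times$ (where $H$ is the unramified quadratic extension of $k_u$). Once this structural input is in hand, the first formula of Theorem \ref{Ro2} is available even in residue characteristic $2$ or $3$, and Lemma \ref{anlemma1} continues to give $c(\tau_{\rho,u}) = c(\tau_{1,u})$.

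The argument then splits along two orthogonal axes. First, the case $e = 1$ is trivial: $E$ has good reduction over $k_u$ already, so Proposition \ref{analyticgoodred} applies. For $e \in \{2,3,4,6\}$, I would first consider the subcase where $\Weil{M}{k_u}$ is abelian (which is forced, for instance, when Frobenius acts trivially on the cyclic group $\Lambda$, and more generally is characterized by $E$ acquiring good reduction over an abelian extension). Here the formula collapses to $W(E/k_u,\tau) = c(\tau)$ and Lemma \ref{anlemma1} immediately yields the equality of root numbers.

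In the remaining nonabelian subcase, $\sigma = \ind{H}{k_u}{\phi}$ and the full formula $W(E/k_u,\tau) = c(\tau)(-1)^{\ip{1}{\tau} + \ip{\eta}{\tau} + \ip{\hat{\sigma}}{\tau}}$ applies. I would then compute these inner products separately for $\tau = \tau_{\rho,u}$ and $\tau = \tau_{1,u}$, further distinguishing whether $K_v/k_u$ is ramified or unramified. In the unramified case the calculation proceeds exactly as in Proposition \ref{analytic5unram}: the order of $\rho$ is a power of $p > 3$, while $\psi = \phi\theta$ has order in $\{2,3,4,6\}$, so $\ip{\hat{\sigma}}{\tau_{\rho,u}} = 0$, and a direct decomposition $\tau_{1,u} = 1 \oplus \eta$ together with $\hat{\sigma}$ being dihedral and non-abelian also gives $\ip{\hat{\sigma}}{\tau_{1,u}} = 0$. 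In the ramified case, $\tau_{1,u}$ still decomposes as $1 \oplus \mu$ with $\mu$ the ramified quadratic character of $K_v/k_u$, and I would verify the sum $\ip{1}{\tau} + \ip{\eta}{\tau} + \ip{\hat{\sigma}}{\tau}$ has the same parity for $\tau_{\rho,u}$ and $\tau_{1,u}$ using that $\rho$ has order a power of $p \geq 5$ while $\hat{\sigma}$ is built from a tame character of order dividing $12$.

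The main obstacle, and the reason the hypothesis on $\Lambda$ is essential, is precisely the first step: ruling out the pathological cases (such as $e = 12$ in residue characteristic $2$ or $3$ identified by Kraus and Whitehouse) where $\sigma_{E/k_u}$ is not of the induced form and Theorem \ref{Ro2} simply gives no formula. Once one justifies that the hypothesis forces $\sigma_{E/k_u}$ into the tame dihedral-or-abelian dichotomy, the rest is a bookkeeping exercise mirroring Propositions \ref{analytic5unram} and \ref{analytic5ram}, so I would be careful to cite the relevant result of Kraus explicitly and to verify that the parities of $\psi$ and $\rho$ have coprime orders in every subcase, which is what ultimately forces $\ip{\hat{\sigma}}{\tau_{\rho,u}} = 0$.
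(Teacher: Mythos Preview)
Your approach is essentially the same as the paper's, and the core of the argument (abelian vs.\ nonabelian dichotomy for $\sigma_{E/k_u}$, then computing $\ip{1}{\tau}+\ip{\eta}{\tau}+\ip{\hat\sigma}{\tau}$ via the order mismatch between $\rho$ and $\psi$) is correct.

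However, you are doing unnecessary work in one place. Since $v\mid 6$ and $p>3$, automatically $v\nmid p$; then the standing assumption that $v$ ramifies in $L/K$ together with Lemma~6.5 of \cite{MR} forces $K_v/k_u$ to be \emph{unramified}. So the ramified subcase you outline simply does not occur, and the paper dispenses with it in one line at the start of the proof. Your argument is not wrong, just longer than needed, and the ramified-case bookkeeping you sketch (with $\mu$ the ramified quadratic character) is vacuous here.

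One minor bibliographic point: the paper attributes the structural input (that under the hypothesis on $\Lambda$ the representation $\sigma_{E/k_u}$ is either abelian or induced from the unramified quadratic extension $H=K_v$) to Kobayashi \cite{Kob} for $\ell=3$ and Whitehouse \cite{White} for $\ell=2$, rather than to Kraus. Kraus is cited only for context on determining $\Lambda$ for a given curve.
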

\begin{proof}
 When $v\mid 6$, we have $\ell\leq 3$ and $p\geq 5$, so $v\nmid p$. As in the proof of Proposition \ref{analytic5unram}, we see $K_v/k_u$ is unramified. Proposition 3.2 of \cite{Kob}, when $\ell=3$, and Lemma 5.1 of \cite{White}, when $\ell=2$, give criteria for the image of $\sigma_{E/k,u}$ to be abelian. If it is abelian, then $\sigma_{E/k,u}=\chi\oplus\chi^{-1}$ and so Proposition \ref{Ro7} gives
 $$W(E/k_u,\tau)=\det{\tau}(-1).$$
 Lemma \ref{anlemma1} then gives the desired result.\\
\indent If the image of $\sigma_{E/k,u}$ is not abelian, then we use the assumption that $\Lambda\cong\Z/e\Z$ for $e=2,3,4$ or $6$. In this case, as for $\ell\geq 5$, we have $\sigma:=\sigma_{E/k,u}=\ind{K_v}{k_u}{\chi},$ with $\chi$ a character of $\Kx_v$ (see \cite[\S 2,5]{Kob}, \cite[\S 6]{White}). From the first part of Theorem \ref{Ro2},
 $$W(E/k_u,\tau)=W(\sigma\otimes\tau)=\det{\tau}(-1)\cdot(-1)^{\ip{1}{\tau}+\ip{\eta}{\tau}+\ip{\hat{\sigma}}{\tau}}.$$
Again, $\ip{\hat{\sigma}}{\tau}$ is the significant term. For $\ell\geq 5$ Rohrlich proves that $\hat{\sigma}=\hat{\sigma_e}$, and under the assumption on $\Lambda$ the same proof works when $\ell\leq 3$. So, as above $\ip{\hat{\sigma}}{\tau}=0$ when $\tau=\tau_{\rho,u}$.\\
\indent When $\tau=\tau_{1,u}\cong 1\oplus\eta$, we have $\ip{\hat{\sigma}}{\tau}=0$ and we conclude
 $$W(E/k_u,\tau_{\rho,u})/W(E/k_u,\tau_{1,u})=1.$$
\end{proof}

\subsubsection{Arithmetic}
\label{potgoodarithmetic}
\begin{prop}
\label{arithnotp} 
 Let $K_v$ be a local field, with $v\nmid p$. If $E$ has additive reduction over $K_v$ then $\delta_v=0$.
\end{prop}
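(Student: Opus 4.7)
The plan is to prove the stronger statement that $E(K_v)/pE(K_v)=0$, from which $\delta_v=0$ follows immediately. This rests on two observations that let us split into three short steps.

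First, I would reduce $\delta_v$ to $\dim_{\F_p}E(K_v)/pE(K_v)$. For any $P\in E(K_v)\subseteq E(L'_w)$ the element $P$ is fixed by $\Gal{L_w}{L'_w}$, which has order $p$, so
$$N_{L_w/L'_w}(P)=\sum_{g\in\Gal{L_w/L'_w}}gP=pP\in N_{L_w/L'_w}E(L_w).$$
Hence $pE(K_v)\subseteq E(K_v)\cap N_{L_w/L'_w}E(L_w)$, and the quotient defining $\delta_v$ is a quotient of $E(K_v)/pE(K_v)$.

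Next, I would show $E(K_v)/pE(K_v)=0$ using the standard filtration $E_1(K_v)\subseteq E_0(K_v)\subseteq E(K_v)$, where $E_0(K_v)$ consists of points with nonsingular reduction and $E_1(K_v)$ is the kernel of reduction. Since $v\nmid p$, the residue characteristic $\ell$ differs from $p$, so the formal group $E_1(K_v)$ is uniquely $p$-divisible (the $[p]$-series has leading coefficient $p$, a unit in $\Oc_{K_v}$). Under the additive reduction hypothesis, $E_0(K_v)/E_1(K_v)\cong\tilde{E}_{ns}(\kappa_v)\cong\kappa_v^+$ is an $\ell$-group, again uniquely $p$-divisible. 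A snake-lemma argument applied to multiplication by $p$ on $0\to E_1(K_v)\to E_0(K_v)\to\tilde{E}_{ns}(\kappa_v)\to 0$ then shows $E_0(K_v)$ is uniquely $p$-divisible.

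Finally, I would invoke the Kodaira--N\'eron classification: the component group $\Phi(K_v):=E(K_v)/E_0(K_v)$ for an elliptic curve with additive reduction has order at most $4$. Since $p>3$ by the global hypothesis, $p\nmid|\Phi(K_v)|$, so $\Phi(K_v)$ has no $p$-torsion and $\Phi(K_v)/p\Phi(K_v)=0$. Applying the snake lemma to multiplication by $p$ on $0\to E_0(K_v)\to E(K_v)\to\Phi(K_v)\to 0$ gives $E(K_v)[p]=0$ and $E(K_v)/pE(K_v)=0$, completing the proof.

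There is no significant obstacle here; the only subtlety is bookkeeping, namely making sure the additive reduction hypothesis is used precisely at the point where we bound the order of $\Phi(K_v)$, and that the standing assumption $p>3$ is what rules out $p$-torsion in the component group.
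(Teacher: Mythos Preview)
Your proof is correct and follows essentially the same approach as the paper: both arguments show $E(K_v)/pE(K_v)=0$ by running through the filtration $E_1\subset E_0\subset E$, using that $E_1(K_v)$ is pro-$\ell$, that $E_0/E_1\cong\kappa^+$ under additive reduction, and that $|E/E_0|\leq 4$. One small correction to your closing remark: the additive reduction hypothesis is used not only to bound $|\Phi(K_v)|$ but also earlier, to identify $\tilde{E}_{ns}(\kappa_v)$ with the additive group $\kappa_v^+$ rather than $\kappa_v^\times$.
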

\begin{proof}
 If $E$ has additive reduction, then 
 	\begin{equation}
 	\label{additivereductionisom}
 		E_0(K_v)/E_1(K_v)\cong\tilde{E}_{ns}(\kappa)\cong\kappa^+,
 	\end{equation}
 with $\kappa$, the residue field of $K_v$, a finite field of characteristic $\ell\neq p$. We recall two facts (see \S VII.3 and \S VII.6 of \cite{Silv}),
 \begin{enumerate}
  \item[(1)] $E_1(K_v)\cong \Z_{\ell}^r\oplus T$ for some finite $\ell$-group $T$.
  \item[(2)] $|E(K_v)/E_0(K_v)|\leq 4$.
 \end{enumerate}
Since $p\nmid 6\ell$ these two facts yield
	$$E(K_v)/pE(K_v)\cong E_0(K_v)/pE_0(K_v)\cong E_1(K_v)/pE_1(K_v)=0,$$
showing that $E(K_v)$ has no $p$-subgroups and so $\delta_v\equiv 0$.
\end{proof}

The next proposition is a first step into the more difficult case of potentially good reduction at a prime above $p$. To compare with the calculations of the analytic root number, the case that $K_v/k_u$ is ramified (Proposition \ref{analytic5ram}) overlaps, but does not exactly coincide, with this setting (see the proof of Proposition \ref{analytic5unram}). The tools we use for this case were developed by Mazur \cite[\S 4]{MazurRP} and Lubin and Rosen \cite{LubRos} (see also \cite[App. B]{MR}), and as such, we are only able to deal with the case that $E$ attains good ordinary reduction. In \cite[\S 1.d]{MazurRP}, Mazur remarks that the case of supersingular primes is very different and more difficult. Theorem \ref{MR5.7} above remains the only result for $\delta_v$ in the case of supersingular reduction.\\
\indent In order to state the proposition, we must recall a definition (see \cite[App. B]{MR}, also \cite[\S1.b]{MazurRP}). Recall that $E$ is defined over $k_u$, which is an extension of $\Q_p$ when $u\mid p$. For $\Kc$ a finite extension of $k_u$, denote $\kappa$ for the residue field of $\Kc$, and denote $\tilde{E}$ for the reduction of $E$ at the prime of $\Kc$. If $E$ has good ordinary reduction over $\Kc$ then we say that $E$ has \emph{anomalous} reduction over $\Kc$ if $\tilde{E}(\kappa)[p]\neq 0$, and we say $E$ has \emph{non-anomalous} reduction otherwise.

\begin{prop}
\label{arithp}
 If $v\mid p$, $E$ has additive reduction over $K_v$, and $E$ attains good, ordinary, non-anomalous reduction over a Galois extension $M/K_v$ with $[M:K_v]$ prime to $p$, then $\delta_v\equiv 0$.
\end{prop}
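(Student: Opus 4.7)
The overall strategy is to use the prime-to-$p$ extension $M/K_v$ to reduce from $K_v$, where $E$ has additive reduction, to $M$, where $E$ has good ordinary non-anomalous reduction, and then apply the formal-group calculations of Mazur \cite[\S 4]{MazurRP} and Lubin-Rosen \cite{LubRos}, as packaged in \cite[App.~B]{MR}.

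First, I would set $M' := ML'_w$ and $M'' := ML_w$, and let $G := \Gal{M}{K_v}$, which has order coprime to $p$. Since $[L_w:L'_w] = p$ is a $p$-power and $|G|$ is prime to $p$, $M \cap L_w = K_v$, so $M$ is linearly disjoint from $L_w$ and $L'_w$ over $K_v$. Hence $G$ canonically identifies with both $\Gal{M'}{L'_w}$ and $\Gal{M''}{L_w}$, and $M''/M'$ is cyclic of degree $p$. The norm map $N_{M''/M'}$ is then $G$-equivariant and restricts on $E(L_w) = E(M'')^G$ to $N_{L_w/L'_w}$.

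Second, since $|G|$ is prime to $p$, restriction gives isomorphisms $E(K_v) \otimes \F_p \isomto (E(M) \otimes \F_p)^G$ and similarly with $L'_w$ in place of $K_v$ and $M'$ in place of $M$; combined with the $G$-equivariance above, a standard diagram chase yields
$$\dim{\F_p} \frac{E(K_v)}{E(K_v) \cap N_{L_w/L'_w} E(L_w)} \equiv \dim{\F_p} \frac{E(M)}{E(M) \cap N_{M''/M'} E(M'')} \mod{2}.$$
Thus it suffices to establish the parity claim for $E/M$ in the extension $M''/M'$.

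Third, over $M$ the curve $E$ has good ordinary non-anomalous reduction. Any $p$-extension of $M$ has residue-field extension of $p$-power degree, and since $|\kappa_M^\times|$ is coprime to $p$, the character on the étale $p$-torsion remains non-trivial after such a residue extension; hence non-anomalous propagates to $M'$ and $M''$. The formal-group sequence $0 \to \hat{E}(\mathfrak{m}_F) \to E(F) \to \tilde{E}(\kappa_F) \to 0$ for $F \in \{M, M', M''\}$ therefore has $p$-primary part concentrated in $\hat{E}(\mathfrak{m}_F)$, so the norm-index computation at $M$ collapses onto the analogous computation for the height-$1$ formal group $\hat{E}$. The Lubin-Tate analysis of \cite[App.~B]{MR}, built on \cite{MazurRP} and \cite{LubRos}, then shows the resulting $\F_p$-dimension is even.

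The main obstacle will be the base-change step: verifying carefully that the $G$-invariants of the norm-quotient over $M$ agree, modulo $2$, with the corresponding quotient over $K_v$. This relies on the vanishing of higher $G$-cohomology on $p$-primary coefficients, which uses $p \nmid [M:K_v]$ in an essential way, together with the compatibility of $N_{M''/M'}$ restricted to $E(L_w)^G$-invariants with $N_{L_w/L'_w}$ and with the inclusion $E(K_v) \hookrightarrow E(L'_w)$.
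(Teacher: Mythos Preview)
Your field diagram and the plan to invoke \cite[App.~B]{MR} over $M$ match the paper. The difference is in the descent from $M$ back to $K_v$, and here your route is both harder and not fully justified.

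First, Propositions~B.2 and~B.3 of \cite{MR} give more than you use: in the good ordinary non-anomalous situation they show that the norm $N_{M''/M'}\colon E(M'')\to E(M')$ is \emph{surjective}, not merely that some $\F_p$-dimension is even. The paper exploits this directly. Writing $\Gamma=\Gal{M}{K_v}=\Gal{M''}{L_w}$ and $H=\Gal{L_w}{L'_w}=\Gal{M''}{M'}$, surjectivity of $N_H$ on $E(M'')$ together with $N_\Gamma\circ N_H=N_H\circ N_\Gamma$ yields
\[
[E(L'_w):N_\Gamma(E(M'))]\;=\;[E(L'_w):N_H(N_\Gamma(E(M'')))].
\]
The left-hand index is prime to $p$, since $|\Gamma|\cdot E(L'_w)\subset N_\Gamma(E(M'))$ and $p\nmid|\Gamma|$; the right-hand index is divisible by $[E(L'_w):N_H(E(L_w))]$, which is a $p$-power because $p\cdot E(L'_w)\subset N_H(E(L_w))$. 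Hence $N_H(E(L_w))=E(L'_w)\supset E(K_v)$ and the quotient defining $\delta_v$ is actually zero, not just even-dimensional.

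Your step~2, the asserted mod-$2$ equality of the two norm-quotients, is exactly the obstacle you flag, and it is a genuine one: taking $G$-invariants of $E(M')/N_{M''/M'}(E(M''))$ does not formally recover $E(L'_w)/N_{L_w/L'_w}(E(L_w))$, because one needs $N(E(M'')^G)=(N(E(M'')))^G$, and the non-obvious inclusion requires control of $H^1(G,\ker N)$ for a kernel that is not an $\F_p$-module. This can probably be pushed through with care, but it is unnecessary: once you know the norm is surjective upstairs, the three-line index argument above replaces your cohomological base-change entirely.
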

\begin{proof} Let $E^k$ denote a model for $E$ defined over $k_u$, and let $E^M$ denote a model of $E$ defined over $M$ for which $E$ has good, ordinary, non-anomalous reduction. We have an isomorphism $E^k\to E^M$ defined over $M$, giving $E^k(\Mc)\cong E^M(\Mc)$, where $\Mc=ML_w$, and similarly for $\Mc'=ML'_w$. We denote $\Gamma:=\Gal{M}{K_v}$ and $H=\Gal{L_w}{L'_w}$, and note that
	$$\Gal{M}{K_v}=\Gal{\Mc'}{L'_w}=\Gal{\Mc}{L_w},\hspace{.15in}\Gal{L_w}{L'_w}=\Gal{\Mc}{\Mc'}.$$
By Propositions B.2 and B.3 of \cite{MR}, we have that $N_H:E^M(\Mc)\to E^M(\Mc')$ is surjective, and hence $N_H:E^k(\Mc)\to E^k(\Mc')$ is surjective also. From this and $N_\Gamma\circ N_H=N_H\circ N_\Gamma$ we have
	\begin{equation}
	\label{arithp-indices}	
		\begin{array}{rl}
			[E^k(L'_w):N_\Gamma(E^k(\Mc'))]= & [E^k(L'_w):N_\Gamma\circ N_H(E^k(\Mc))]\\
			= & [E^k(L'_w):N_H\circ N_\Gamma(E^k(\Mc))].\\
	  \end{array}
	\end{equation}
Since $\Gamma$ has order prime to $p$ and
	$$|\Gamma|\cdot E^k(L'_w)\subset N_\Gamma(E^k(\Mc'))\subset E^k(L'_w),$$
the first term in \eqref{arithp-indices} is prime to $p$. Since $H$ has order $p$ and
	$$N_H\circ N_\Gamma(E^k(\Mc))\subset N_H(E^k(L_w))\subset E^k(L'_w),$$
the last term in \eqref{arithp-indices} is divisible by some power of $p$ when $N_H(E^k(L_w))\neq E^k(L'_w)$. Since this is impossible, we must have 
	$$N_H(E^k(L_w))= E^k(L'_w)\supset E^k(K_v)$$
and hence by Definition \ref{arithconstdef} we have $\delta_v\equiv 0$.
\end{proof}

\section{Results and Applications}
\label{resultsandapplications}
We keep the setting and notation of \S\ref{arithmeticlocalconstants}. Recall that the motivation for the calculations in the previous section has been to determine if the sum $\sum\delta_v$, over $v\mid u$, of arithmetic local constants coincide with a quotient of local root numbers of the conjectural functional equation for $\Lambda(E/k,\tau,s)$. We define the local analytic constant $\gamma_u:=\gamma(u,E,\rho)\in\Z/2\Z$ by
	$$(-1)^{\gamma_u}=W(E/k_u,\tau_{\rho,u})/W(E/k_u,\tau_{1,u}),$$
which gives, analogous to Theorem \ref{MR6.4},
	\begin{equation}
	\label{rootquotient}
		r^{an}(E,\tau_\rho)-r^{an}(E,\tau_1)\equiv\sum_u\gamma_u\mod{2}.
	\end{equation}
With \eqref{introrelparity} in mind, we aim to verify that for every $u$
	\begin{equation}
	\label{arith-analytic-comparison}
			\gamma_u\equiv\sum_{v\mid u}\delta_v\mod{2}.
	\end{equation}

Our main result, Theorem \ref{paritythm} below, gives a setting in which \eqref{arith-analytic-comparison} can be established. Note that for the arithmetic local constant conditions (a) and (b) in the theorem are those addressed in \cite{MR} by Mazur and Rubin (cf. Theorems \ref{MR5.7}-\ref{MR6.7}), and conditions (c) and (d) encompass the new information obtained by the calculations of \S \ref{localcomputations}.\\
\indent Define $\Sk=\braces{\text{primes $v$ of $K$~:~$v^c=v$, $v$ ramifies in $L/K$, and $v\mid 6p$}}.$

\begin{theorem}
\label{paritythm}
 Suppose that for all primes $v\in \Sk$ one of the following holds:
 \begin{enumerate}
   \item[(a)] $E$ has good reduction at $v$ and if $v\mid p$ then $E$ has good ordinary reduction.
   \item[(b)] $E$ is defined over $\Q_p\subset K_v$ with good supersingular reduction at $p$, and $K_v$ contains the unramified quadratic extension of $\Q_p$. 
   \item[(c)] $E$ has potential multiplicative reduction at $v$,
   \item[(d)] $E$ has additive, potential good reduction at $v$, acquiring good reduction over an abelian extension of $K_v$ when $v\mid 6p$, and moreover ordinary, non-anomalous reduction when $v\mid p$.
 \end{enumerate}
 Then for every prime $u$ of $k$, we have $\gamma_u\equiv\sum_{v\mid u}\delta_v\mod{2}$.
\end{theorem}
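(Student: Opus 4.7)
The plan is to verify $\gamma_u \equiv \sum_{v\mid u}\delta_v \pmod 2$ by a case analysis on the primes $v\mid u$, pairing each case on the analytic side with its arithmetic counterpart already established in Sections \ref{splitprimes}--\ref{comparingpotgoodred}. Since both $\gamma_u$ and each $\delta_v$ depend only on the completions, I would organize the case analysis by the splitting behavior of $u$ in $K/k$ and then, when $v^c = v$, by the ramification of $v$ in $L/K$ and the reduction type of $E$ at $v$.

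First, if $u$ splits in $K/k$ then the two primes $v, v^c$ above $u$ both satisfy $v^c\neq v$: Proposition \ref{analyticMR5.1} gives $\gamma_u=0$, and Lemma 5.1 of \cite{MR} gives $\delta_v=\delta_{v^c}=0$. If instead $v^c=v$ and $v$ is unramified in $L/K$, Proposition \ref{analyticMR5.3} together with the corresponding Corollary 5.3 of \cite{MR} yields $\gamma_u=0=\delta_v$ (both arising from the isomorphism $\tau_{\rho,u}\cong\tau_{1,u}$). Thus the nontrivial case is when $v$ is the unique prime of $K$ above $u$ and $v$ is ramified in $L/K$.

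In that remaining case I subdivide by reduction type. For good reduction at $v$, Proposition \ref{analyticgoodred} gives $\gamma_u=0$ and one of Theorems \ref{MR5.6-6.6}, \ref{MR5.7}, \ref{MR6.7} gives $\delta_v=0$, invoking hypothesis (a) or (b) when $v\in\Sk$. For potential multiplicative reduction (hypothesis (c) when $v\in\Sk$), Propositions \ref{anmult} and \ref{arithmult} produce the common criterion: $\gamma_u\equiv\delta_v\equiv 0$ iff $E$ does not have split multiplicative reduction over $K_v$, so the two constants agree. For additive potential good reduction, if $v\notin\Sk$ then $v\nmid 6p$, so Lemma 6.5 of \cite{MR} forces $K_v/k_u$ unramified and Propositions \ref{arithnotp} and \ref{analytic5unram} both vanish. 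If $v\in\Sk$, hypothesis (d) lets Proposition \ref{arithp} (resp.\ \ref{arithnotp}) give $\delta_v=0$ according as $v\mid p$ or $v\mid 6,\, v\nmid p$, while the abelian-extension hypothesis invokes Proposition \ref{analytic5ram} (when $K_v/k_u$ is ramified and $v\nmid 6$), Proposition \ref{analytic5unram} (when $K_v/k_u$ is unramified and $v\nmid 6$), or Proposition \ref{analytic3} (when $v\mid 6$) to give $\gamma_u=0$.

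The main bookkeeping obstacle is the $v\mid 6$ sub-subcase of (d): one must check that ``$E$ acquires good reduction over an abelian extension of $K_v$'' suffices to feed Proposition \ref{analytic3}, whose hypothesis is that the inertia image $\Lambda$ is cyclic of order dividing $6$. This follows by combining Proposition 2 of \cite{RohrVar} (identifying acquisition of good reduction over an abelian extension with abelianness of $\Weil{M}{k_u}$) with the Serre--Tate classification of $\Lambda$ for potential good reduction. Once this bookkeeping is checked, the identity $\gamma_u\equiv \sum_{v\mid u}\delta_v \pmod 2$ is obtained in each case by reading off the paired results, and the theorem follows.
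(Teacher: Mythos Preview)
Your proposal is correct and follows essentially the same case-by-case approach as the paper, pairing each analytic proposition with its arithmetic counterpart in the same way. One minor addition: the translation via \cite[Prop.~2]{RohrVar} (good reduction over an abelian extension $\Leftrightarrow$ over a totally ramified cyclic extension of degree $2,3,4$ or $6$) is needed not only to feed Proposition~\ref{analytic3} when $v\mid 6$, but also, as the paper makes explicit, to guarantee $[M:K_v]$ is prime to $p$ when $v\mid p$, so that Proposition~\ref{arithp} actually applies.
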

\begin{proof}
	Let $u$ be a prime of $k$, with $v$, $v^c$ the primes of $K$ above $u$. If $v\not\in\Sk$ then $v^c\neq v$, $v$ is unramified in $L/K$, or $v\nmid 6p$. If $v^c\neq v$ then Lemma 5.1 of \cite{MR} shows that $\delta_v=\delta_{v^c}$ and Proposition \ref{analyticMR5.1} shows that $\gamma_u\equiv 0$, so we have the claim in this case. For the remainder we may assume $v^c=v$, and so we wish to see $\gamma_u\equiv\delta_v$.\\
\indent If $v$ is unramified in $L/K$, Proposition \ref{analyticMR5.3} shows $\gamma_u\equiv 0$. Also, in this case Lemma 6.5 of \cite{MR} shows that $v$ splits completely in $L/K$. Then $\Norm{L_w}{K_v}$ is surjective and hence, by Definition \ref{arithconstdef}, $\delta_v\equiv 0$.\\
\indent In the case $v\nmid 6p$, we have $v\nmid 6$ and $v\nmid p$. If $E$ has good reduction at $v$ then Theorem \ref{MR5.6-6.6} shows $\delta_v\equiv 0$, and Proposition \ref{analyticgoodred} gives $\gamma_u\equiv 0$. If $E$ has potential multiplicative reduction then Proposition \ref{arithmult} and Proposition \ref{anmult}, for $\delta_v$ and $\gamma_u$, respectively, give the result. Lastly, if $E$ has potential good reduction, then we apply Proposition \ref{arithnotp} and Proposition \ref{analytic5unram}.\\
\indent For $v\in\Sk$, cases (a) and (b) follow from Theorem \ref{MR6.7} and Theorem \ref{MR5.7} for $\delta_v$, respectively, and Proposition \ref{analyticgoodred} for $\gamma_u$. Case (c) is covered by Proposition \ref{arithmult} for $\delta_v$ and Proposition \ref{anmult} for $\gamma_u$.\\
\indent For case (d), we consider the (mutually exclusive) cases $v\mid 6$ and $v\mid p$ separately. We first note that the condition that $E$ acquires good reduction over an (necessarily finite) abelian extension of $K_v$ is equivalent to (see \cite[Prop 2]{RohrVar}) $E$ acquiring good reduction over a totally ramified cyclic extension of degree 2, 3, 4 or 6. So, for $v\mid 6$, the conditions of (d) are exactly those necessary to apply Proposition \ref{analytic3} for $\gamma_u$. Also, as $v\nmid p$ we can apply Proposition \ref{arithnotp} for $\delta_v$ and the claim follows. In the latter case, all of the possible degrees for the cyclic extension over which $E$ acquires good reduction are prime to $p\geq 5$. Thus, the extra condition that $E$ acquries ordinary, non-anomalous reduction allows us to apply Proposition \ref{arithp} for $\delta_v$. Also, since $v\nmid 6$ and $v\mid p$, the stated conditions allow us to apply Proposition \ref{analytic5ram} for $\gamma_u$.
\end{proof}

\indent With Theorem \ref{MR6.4} and \eqref{rootquotient} in mind, a consequence of Theorem \ref{paritythm} is the following ``relative'' version of the $p$-Selmer Parity Conjecture (cf. \eqref{introrelparity}). We note that the result is alerady known via work by Dokchitser and Dokchitser in \cite{DDReg}, and can also be recovered from work by Greenberg in \cite{Greenberg}. Our contribution is to obtain a new proof using Mazur and Rubin's theory of arithmetic local constants and to allow for some cases of additive bad reduction at primes above 2 and 3.

\begin{cor}
\label{relparitycor}
 If $E/k$ satisfies the hypothesis of Theorem \ref{paritythm}, then 
 	$$r_p^{arith}(E,\tau_{\rho})-r_p^{arith}(E,\tau_1)\equiv r^{an}(E,\tau_{\rho})-r^{an}(E,\tau_1)\mod{2}.$$
\end{cor}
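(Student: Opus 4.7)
The plan is to chain together three results already established: Theorem~\ref{MR6.4}, which writes the arithmetic side modulo 2 as a finite sum of local constants $\delta_v$; equation~\eqref{rootquotient}, which writes the analytic side modulo 2 as a sum of local constants $\gamma_u$; and Theorem~\ref{paritythm}, which supplies the prime-by-prime match $\gamma_u \equiv \sum_{v \mid u}\delta_v \mod{2}$. Once these are in hand, the corollary will follow by summing the local identity over all $u$ and showing that the resulting sum over all $v$ agrees mod 2 with the finite sum appearing in Theorem~\ref{MR6.4}.

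Concretely, I would fix $S$ to be the set of primes of $K$ dividing $p$, ramifying in $L/K$, or where $E$ has bad reduction, so that Theorem~\ref{MR6.4} yields
$$r_p^{arith}(E,\tau_\rho) - r_p^{arith}(E,\tau_1) \equiv \sum_{v \in S} \delta_v \mod{2}.$$
Summing the identity of Theorem~\ref{paritythm} over all places $u$ of $k$ and interchanging the order of summation gives
$$\sum_u \gamma_u \equiv \sum_u \sum_{v \mid u}\delta_v = \sum_v \delta_v \mod{2},$$
where the right-hand sum runs over all primes $v$ of $K$. Combined with \eqref{rootquotient}, this expresses the analytic side modulo 2 as $\sum_v \delta_v$.

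The only remaining step is to reconcile $\sum_v \delta_v$ with $\sum_{v \in S}\delta_v$, i.e.\ to show $\sum_{v \notin S}\delta_v \equiv 0 \mod{2}$. For $v \notin S$ with $v^c = v$, the prime $v$ is unramified in $L/K$ and hence splits completely by Lemma~6.5 of \cite{MR}, which forces $L_w = K_v$ and $\delta_v = 0$ directly from Definition~\ref{arithconstdef}. For $v \notin S$ with $v^c \neq v$, Lemma~5.1 of \cite{MR} gives $\delta_v = \delta_{v^c}$, so the pair $\{v,v^c\}$ contributes $2\delta_v \equiv 0 \mod{2}$. Therefore $\sum_v \delta_v \equiv \sum_{v \in S}\delta_v \mod{2}$, and assembling the three congruences produces the asserted equality.

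No single step is a genuine obstacle, since the substance of the argument is already encoded in the case analysis of Theorem~\ref{paritythm}. The main thing to present carefully is the reconciliation of the two sums above, which is really the only point where one must reason about primes outside the set $\Sk$ of Theorem~\ref{paritythm} independently of the local matching provided there.
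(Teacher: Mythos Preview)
Your proposal is correct and follows essentially the same route as the paper: chain Theorem~\ref{MR6.4}, Theorem~\ref{paritythm}, and \eqref{rootquotient}. The paper's own proof is a two-line display that writes $\sum_v\delta_v$ without specifying the index set and moves on; you have simply made explicit the reconciliation between the finite sum over $S$ in Theorem~\ref{MR6.4} and the sum over all $v$ arising from summing the local identity, which the paper leaves implicit (and which is in any event already contained in the case analysis at the start of the proof of Theorem~\ref{paritythm}).
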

\begin{proof}
	Applying Theorem \ref{MR6.4} and \eqref{rootquotient} we have $\mod{2}$
		$$\begin{array}{rll}
				r_p^{arith}(E,\tau_{\rho})-r_p^{arith}(E,\tau_1)\equiv & \sum_v\delta_v & \\
				\equiv & \sum_u\gamma_u\equiv & r^{an}(E,\tau_{\rho})-r^{an}(E,\tau_1).
			\end{array}$$
\end{proof}

\subsection{Growth in $p$-Selmer Rank}
\label{selrankapplications}
In addition to the above considerations, the theory of arithmetic local constants allows one to provide lower bounds for growth of $p$-Selmer rank in dihedral extensions of degree $2p^n$. In \S 7 and \S 8 of \cite{MR}, Mazur and Rubin provide the first such theorems, making assumptions on the elliptic curve $E$ in question so that the only $\delta_v$ which survive (in Theorem \ref{MR6.4}) are those which had been calculated in \cite{MR}. Having calculated $\delta_v$ in some new cases of bad reduction, we can make analogous statements with relaxed assumptions on $E$.\\
\indent We now consider $k\subset K\subset F$, where $K/k$ is quadratic as before, $F/K$ is abelian, and $F/k$ a dihedral extension of degree $2p^n$. The cyclic extensions $L/K$ contained in $F$ are in a one-to-one correspondence with the irreducible rational representations $\psi_L$ of $\Gal{F}{K}$. For a fixed $L$, $\rho_L$ is a 1-dimensional component of $\psi_L$, and we apply the calculations of \S\ref{localcomputations} with respect to this $\rho_L$.\\
\indent Let $\Sk=\braces{\text{primes $v$ of $K$~:~$v^c=v$, $v$ ramifies in $F/K$}}$, which differs from that of the previous section by removing the restriction that $v\mid 6p$. Also, define 
	$$\Sk_m:=\braces{\text{$v$ of $K$ for which $E$ has split multiplicative reduction}}.$$
We note that the conditions for $v\mid 6$ of Theorem \ref{paritythm} were imposed in order to handle the local analytic root numbers and so are no longer needed. Before stating the theorem, we recall a result of \cite{MR} for more convenient referencing.

\begin{prop}[Corollary 3.7 of \cite{MR}]
\label{MR3.7}
	There is a $\Gal{F}{K}$-equivariant isomorphism $\Sc_p(E/F)\cong\bigoplus_{L} \Sc_p(E/K)^{\tau_{\rho_L}}$.
\end{prop}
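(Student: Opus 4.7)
The plan is to decompose $\Sc_p(E/F)$ according to characters of the abelian group $\Gal{F}{K}$ and then collect those characters into orbits under the outer action of $c\in\Gal{K}{k}$. First, since $[F:K]=p^n$ and we work with coefficients in $\bar{\Q}_p$, the group algebra $\bar{\Q}_p[\Gal{F}{K}]$ is semisimple by Maschke's theorem, and all its irreducible modules are one-dimensional characters. Hence one has a canonical decomposition
$$\Sc_p(E/F)=\bigoplus_{\rho}\Sc_p(E/F)^{\rho},$$
indexed by $\bar{\Q}_p$-characters $\rho$ of $\Gal{F}{K}$, and this decomposition is automatically $\Gal{F}{K}$-equivariant.

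Next, I would identify each isotypic summand with data coming from an intermediate field. For each $\rho$, let $L\subseteq F$ be the unique cyclic extension of $K$ with $\Gal{F}{L}=\ker{\rho}$, so that $\rho$ factors faithfully through $\Gal{L}{K}$. Inflation--restriction gives a natural map $\Sc_p(E/L)\to\Sc_p(E/F)^{\Gal{F}{L}}$, and since the characters of $\Gal{F}{K}$ appearing in $\Sc_p(E/L)$ are exactly those factoring through $\Gal{L}{K}$, matching $\rho$-isotypic pieces yields $\Sc_p(E/F)^{\rho}\cong\Sc_p(E/L)^{\rho}$ for every such $\rho$.

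Finally, I would use the dihedral structure: $c\in\Gal{K}{k}$ acts on characters of $\Gal{F}{K}$ by $\rho\mapsto\rho^{-1}$, and the irreducible rational representation $\psi_L$ of $\Gal{F}{K}$ is precisely the orbit sum of the faithful characters of $\Gal{L}{K}$. The restriction of $\tau_{\rho_L}=\ind{K}{k}{\rho_L}$ to $\Gal{F}{K}$ decomposes as $\rho_L\oplus\rho_L^{c}$ when $L\neq K$ (and as the trivial character when $L=K$), so
$$\Sc_p(E/F)^{\tau_{\rho_L}}=\Sc_p(E/F)^{\rho_L}\oplus\Sc_p(E/F)^{\rho_L^{c}}\cong\Sc_p(E/L)^{\tau_{\rho_L}}.$$
Summing over the intermediate cyclic subfields $L\subseteq F$ partitions the characters of $\Gal{F}{K}$ into $c$-orbits and reassembles the full decomposition, with $\Gal{F}{K}$-equivariance built in at every step.

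The main obstacle I anticipate is making the inflation--restriction identification precise: one must check cleanly that the natural map $\Sc_p(E/L)\to\Sc_p(E/F)^{\Gal{F}{L}}$ is an isomorphism of $\bar{\Q}_p$-vector spaces. I would address this via the long exact Selmer sequence together with the vanishing of higher cohomology of the $p$-group $\Gal{F}{L}$ acting on a $\bar{\Q}_p$-vector space. Everything else is formal representation theory, so the only genuine content lies in controlling the $\Z_p$-integral structure and confirming the obstructions disappear once one tensors up to $\bar{\Q}_p$.
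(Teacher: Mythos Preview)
The paper does not give its own proof here: the proposition is quoted as Corollary~3.7 of \cite{MR} and invoked as a black box in the proof of Theorem~\ref{selrankthm}, so there is no in-paper argument to compare your attempt against.

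That said, your sketch has a genuine bookkeeping gap in the final step. The direct sum $\bigoplus_L$ is indexed by the cyclic subextensions $K\subseteq L\subseteq F$, which (as the paper notes just before the proposition) are in bijection with the irreducible \emph{rational} representations $\psi_L$ of $\Gal{F}{K}$, i.e.\ with full $\Gal{\Qb}{\Q}$-orbits of characters. Such an orbit contains $\varphi([L:K])$ characters, whereas a $c$-orbit $\{\rho_L,\rho_L^{-1}\}$ has at most two. Hence ``summing over $L$'' does \emph{not} partition the characters of $\Gal{F}{K}$ into $c$-orbits as you assert: already for $[L:K]=p$ there are $(p-1)/2>1$ distinct $c$-orbits of faithful characters but only one such $L$. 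The paper's own usage confirms this reading: in the proof of Theorem~\ref{selrankthm} the $L$-summand is identified with $(R_L\otimes\Q_p)^{d_L}$, of $\Q_p$-dimension $\varphi([L:K])\cdot d_L$, so the symbol $\Sc_p(E/F)^{\tau_{\rho_L}}$ there denotes the entire $\psi_L$-isotypic component over $\Q_p$, not the $\bar{\Q}_p$-isotypic piece for the single two-dimensional irreducible $\tau_{\rho_L}$. To repair your argument you should group the characters by their kernel (equivalently, by $\Q$-Galois orbit), not merely by $c$-orbit; once that is done the decomposition is purely formal, and the inflation--restriction issue you flag is secondary and in fact unnecessary for the statement as it is actually used.
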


The following is our extension of Theorem 7.2 of \cite{MR}. Again, similar results follow from the work of Dokchitser and Dokchitser in \cite{DDReg} and Greenberg in \cite{Greenberg}.

\begin{theorem}
\label{selrankthm}
	Suppose for each $v\in\Sk$ that $E$ satisfies one of the conditions (a)-(c) of Theorem \ref{paritythm} or
		\begin{itemize}
			\item[(d)] $E$ has additive, potential good reduction at $v$, and if $v\mid p$ then $E$ acquires good ordinary, non-anomalous reduction over a totally ramified Galois extension of $K$ of degree prime to $p$.
		\end{itemize}
	If $\dim{\Q_p}\Sc_p(E/K)+|\Sk_m|$ is odd, then
		$$\dim{\Q_p}\Sc_p(E/F)\geq r_p^{arith}(E,\tau_1)+[F:K]-1.$$
\end{theorem}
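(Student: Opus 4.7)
The argument adapts Theorem~7.2 of \cite{MR}; the new ingredient is that conditions (c) and (d) enlarge the set of reduction types at which the arithmetic local constants can still be controlled, via Propositions~\ref{arithmult}, \ref{arithnotp}, and \ref{arithp}. The skeleton of the proof is to combine the decomposition of Proposition~\ref{MR3.7} with a parity argument on each summand supplied by Theorem~\ref{MR6.4}.

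First I would apply Proposition~\ref{MR3.7} to write
$$\dim{\Q_p}\Sc_p(E/F)=r_p^{arith}(E,\tau_1)+\sum_{L\neq K}\phi([L:K])\cdot r_p^{arith}(E,\tau_{\rho_L}),$$
where $L$ ranges over the cyclic subextensions of $F$ containing $K$, using that the $L=K$ summand contributes $\dim{\Q_p}\Sc_p(E/K)=r_p^{arith}(E,\tau_1)$ and that each nontrivial $L$ contributes $\phi([L:K])$ copies of $r_p^{arith}(E,\tau_{\rho_L})$ (one for each Galois conjugate of $\rho_L$). Since $\sum_{L\neq K}\phi([L:K])=[F:K]-1$, the theorem reduces to showing $r_p^{arith}(E,\tau_{\rho_L})\geq 1$ for every nontrivial cyclic $L$.

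The positivity is forced by a parity argument: for each such $L$, Theorem~\ref{MR6.4} gives
$$r_p^{arith}(E,\tau_{\rho_L})-r_p^{arith}(E,\tau_1)\equiv\sum_{v}\delta_v\pmod{2}.$$
I would then run the same case analysis used in the proof of Theorem~\ref{paritythm}, but on the arithmetic side: primes with $v^c\neq v$ pair up by Lemma~5.1 of \cite{MR} and cancel; primes $v^c=v$ unramified in $L/K$ split completely in $L/K$ and yield $\delta_v=0$; primes $v\in\Sk$ satisfying (a), (b), or (d) are handled by Theorems~\ref{MR5.6-6.6},~\ref{MR5.7},~\ref{MR6.7} together with Propositions~\ref{arithnotp} and~\ref{arithp}, all giving $\delta_v\equiv 0$; and case~(c), via Proposition~\ref{arithmult}, gives $\delta_v\equiv 1$ exactly at $v\in\Sk_m$. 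Collecting, the right-hand sum reduces modulo~$2$ to $|\Sk_m|$.

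Since $r_p^{arith}(E,\tau_1)=\dim{\Q_p}\Sc_p(E/K)$ (as recorded in the proof of Theorem~\ref{MR6.4}), the oddness hypothesis then forces $r_p^{arith}(E,\tau_{\rho_L})$ to be odd, hence at least~$1$, for every nontrivial~$L$, completing the bound. The main obstacle is the last reduction step: one must verify that the local sum really collapses \emph{uniformly in $L$} to $|\Sk_m|\pmod{2}$, which requires that every $v\in\Sk_m$ ramifies in every nontrivial cyclic $L\subseteq F$ over $K$, so that the split-multiplicative contributions are not accidentally excised for some~$L$. This holds cleanly when $F/K$ is cyclic, where $\Sk\subseteq\Sk_L$ for every nontrivial $L$; for the general abelian $F/K$ one has to invoke the additional observation that a split-multiplicative place lies in~$\Sk$ under our hypotheses, and combine it with the compatibility of the decomposition in Proposition~\ref{MR3.7} with the $\Gal{F}{K}$-action.
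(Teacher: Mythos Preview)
Your approach is essentially the same as the paper's: decompose $\Sc_p(E/F)$ via Proposition~\ref{MR3.7}, then use Theorem~\ref{MR6.4} together with the local case analysis (Lemma~5.1 of \cite{MR}, Theorems~\ref{MR5.6-6.6}--\ref{MR6.7}, Propositions~\ref{arithmult}, \ref{arithnotp}, \ref{arithp}) to force each $r_p^{arith}(E,\tau_{\rho_L})$ to be odd. The paper's proof is terser --- it simply asserts $\sum_{v\in\Sk}\delta_v=|\Sk_m|$ and defers the structural details to Theorem~8.5 of \cite{MR} --- but the logical content is identical.

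The subtlety you flag at the end, that the collapse to $|\Sk_m|\pmod 2$ must hold uniformly in $L$ (i.e.\ that split-multiplicative primes in $\Sk$ remain ramified in each nontrivial cyclic $L\subset F$), is not addressed explicitly in the paper's proof either; it is absorbed into the appeal to \cite{MR}. So your caution is appropriate, and you have not deviated from the paper's argument --- you have merely been more candid about where the uniformity is being used.
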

\begin{proof}
	The arguments are as in Theorem 8.5 of \cite{MR}, incorporating the new cases in which $\delta_v$ has been determined. By Lemma 5.1 of \cite{MR} and Definition \ref{arithconstdef} (as in the proof of Theorem \ref{paritythm}), we have that $\sum_{v\not\in\Sk}\delta_v\equiv 0\mod{2}$, and so Theorem \ref{MR6.4} gives
		\begin{equation}
		\label{selrankthmMR6.4}
			r_p^{arith}(E,\tau_{\rho})-r_p^{arith}(E,\tau_{1})\equiv\sum_{v\in\Sk} \delta_v.
		\end{equation}
	
By Theorems \ref{MR5.6-6.6}-\ref{MR6.7} and Propositions \ref{arithnotp}, \ref{arithp}, and \ref{arithmult}, we have
		$$\sum_{v\in\Sk} \delta_v=|\Sk_m|.$$		
For each cyclic $L/K$ in $F$ we have a corresponding $R_L$ (see \S\cite{MR}), the maximal order in the cyclotomic field of $[L:K]$-roots of unity, and applying Proposition \ref{MR3.7} we have 
		$$\Sc_p(E/F)\cong\bigoplus_L \Sc_p(E/F)^{\tau_{\rho_L}}\cong\bigoplus_L(R_L\otimes\Q_p)^{d_L}$$
with (except for $L=K$)
	$$d_L\equiv r_p^{arith}(E,\tau_{1})+|\Sk_m|\equiv\dim{\Q_p}\Sc_p(E/F)+|\Sk_m|\mod{2}.$$
We have assumed this to be odd, so $d_L\geq 1$ for each $L\neq K$, and the claim follows.
\end{proof}

\bibliographystyle{plain}
\bibliography{research}

\vspace{0.25in}
\noindent
Department of Mathematics and Computer Science, Colorado College\\
Email: \texttt{sunil.chetty@coloradocollege.edu}

\end{document}